\documentclass{amsart}
\usepackage{amsmath,amssymb,latexsym,color}
\newtheorem{cor}{Corollary}[section]

     \newtheorem{lemma}[cor]{Lemma}

     \newtheorem{prop}[cor]{Proposition}
     
     \newtheorem{thm}[cor]{Theorem}
     \newtheorem{defi}[cor]{Definition}

     \newcommand{\Alt}[1]{\mathrm{Alt}(#1)}

     \newcommand{\Aut}[1]{\mathrm{Aut}(#1)}
     \newcommand{\ov}[1]{\overline{#1}}
     \newcommand{\nor}{\unlhd}

     \newcommand{\PSL}{\mathrm{L}}                 
     \newcommand{\PSU}{\mathrm{U}}
     \newcommand{\PSp}{\mathrm{S}}
     \newcommand{\OO}{\mathrm{O}}

     \newcommand{\Sym}[1]{\mathrm{Sym}(#1)}
     
     \newcommand{\wrr}{\ \mathrm{wr}\ }

     \begin{document}
    \author[Fumagalli]{Francesco Fumagalli}
    \address{Dipartimento di Matematica  e Informatica ``U.\,Dini", \\
   Universit\`a di Firenze\\
   Viale Morgagni 67A, \ I-50134 Firenze, Italy}
   \email{francesco.fumagalli\,@\,unifi.it}
   
   \author[Leinen]{Felix Leinen}
   \address{Institute of Mathematics, \\
   Johannes Gutenberg-University,\\
   D$-$55099 \,Mainz, Germany}
   \email{Leinen\,@\,uni-mainz.de}
   
   \author[Puglisi]{Orazio Puglisi}
   \address{Dipartimento di Matematica  e Informatica ``U.\,Dini", \\
   Universit\`a di Firenze\\ 
   Viale Morgagni 67A, \ I-50134 Firenze, Italy}
   \email{orazio.puglisi\,@\,unifi.it}

\title{A reduction Theorem for nonsolvable finite groups}
\begin{abstract}
Every finite group $G$ has a normal series each of whose factors is either a solvable
group or a direct product of nonabelian simple groups. The minimum number of
nonsolvable factors attained on all possible such series is called the nonsolvable
length of the group and denoted by $\lambda(G)$. 
For every integer $n$, we define a particular class of 
groups of nonsolvable length $n$, called \emph{$n$-rarefied}, 
and we show that every finite group of nonsolvable length $n$ contains 
an $n$-rarefied subgroup. As applications of this result, we improve the 
known upper bounds on $\lambda(G)$ and determine the maximum possible 
nonsolvable length for permutation groups and linear groups of fixed degree
resp. dimension. 
\end{abstract}
\maketitle

\section{Introduction}
Every finite group $G$ has a normal series each of whose factors is 
either a solvable group or a direct product of nonabelian simple groups. 
The minimum number of nonsolvable factors attained on all possible 
such series is called  \emph{the nonsolvable length of $G$} and denoted 
by $\lambda(G)$ (see \cite{khukhro}). 
In a series of recent papers (
see also \cite{khukhro2}, \cite{khukhro3}, 
\cite{DS}, \cite{CS1}, \cite{CS2}) this parameter has been investigated, 
as some bounds on $\lambda(G)$ have been proved to be useful in several situations.

The aim of this paper is it to provide a tool that might be useful when
investigating  properties related to the nonsolvable length. 
For every natural number $n$ denote $\Lambda_n$ the class of groups of 
nonsolvable length $n$.  We define a particular kind 
of $\Lambda_n$-groups, called \emph{$n$-rarefied} (see Definition \ref{minimal}) 
whose structure of normal subgroups is quite restricted. 
E.g. any chief series of an $n$-rarefied group has exactely $n$ nonabelian 
factors and the nonabelian composition factors are simple groups 
isomorphic to one of the following:
$$\PSL_2(2^r),\, \PSL_2(3^r),\, \PSL_2(p^a),\, \PSL_3(3),\, \null^2B_2(2^s), $$
where $p,r,s$ are primes, $p,s$ odd and $a\geq 0$.

The main result of this paper is
\begin{thm}\label{main} 
Every finite group in $\Lambda_n$ contains an $n$-rarefied subgroup.
\end{thm}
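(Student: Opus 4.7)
I would argue by induction on $|G|$: if some proper subgroup of $G$ already lies in $\Lambda_n$, the inductive hypothesis supplies an $n$-rarefied subgroup inside it, so the only case left is that of a group $G \in \Lambda_n$ all of whose proper subgroups have nonsolvable length at most $n-1$. Equivalently, one takes $H \le G$ of minimum order with $\lambda(H)=n$ and proves that this $H$ satisfies the structural constraints packed into Definition~\ref{minimal}. In this way the ``reduction'' part of the statement is immediate and the whole difficulty migrates into showing that ``$\Lambda_n$-minimal'' already implies ``$n$-rarefied''.

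\textbf{Analysis of the chief factors of a $\Lambda_n$-minimal $H$.} Fix a chief series of $H$ and let $V_i/U_i \cong S_i^{k_i}$, $i=1,\ldots,r$, be its nonabelian factors, where each $S_i$ is a nonabelian simple group. What must be established, according to the rough description of $n$-rarefied groups given in the Introduction, is that $r=n$, each $k_i=1$, and each $S_i$ lies on Thompson's list of minimal simple groups (exactly the list $\PSL_2(2^r),\PSL_2(3^r),\PSL_2(p^a),\PSL_3(3),\null^2B_2(2^s)$, i.e.\ the simple groups all of whose proper subgroups are solvable). For the composition-factor restriction, if some $S_i$ admitted a proper nonsolvable subgroup, Thompson's classification would produce a minimal simple $S_i' < S_i$; one then lifts $S_i'$ through the chief series, producing a proper subgroup of $H$ still of nonsolvable length $n$, contradicting minimality. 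An entirely analogous substitution, replacing $S_i^{k_i}$ by a diagonal or single-coordinate copy of $S_i$, forces $k_i=1$, and the same idea forces $r=n$.

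\textbf{Main obstacle.} The technical heart of the argument is the lifting step alluded to above: given a proper admissible subgroup of a single chief section $V_i/U_i$, one needs to produce an honest proper subgroup of $H$ which realises this smaller section at level $i$ while preserving \emph{all} the other $r-1$ nonabelian chief factors. Naive preimages do not work, because the layers of the chief series are knitted together by the action of $H/V_i$, and a random subgroup of $V_i/U_i$ need not be stabilised by enough of that action to be pulled back compatibly. I would handle this by combining Schur--Zassenhaus and Gasch\"utz-type complementation with a downward induction on the position $i$ inside the chief series, at each stage choosing the replacement subgroup compatibly with the action of $H$ on the layer below and controlling $\mathrm{N}_H(\cdot)/\mathrm{C}_H(\cdot)$ so that the $n-1$ surviving nonabelian chief factors above continue to appear. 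The additional normal-structure rigidities demanded by Definition~\ref{minimal} beyond those explicitly listed above should then follow by applying the same minimality argument to appropriate characteristic subgroups of the $H$ so constructed.
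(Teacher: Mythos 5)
Your reduction to an order-minimal $\Lambda_n$-subgroup $H$ is a reasonable opening (such an $H$ does turn out to be $n$-rarefied, though only a posteriori), but the structural goals you then set for $H$ are not the right ones, and the step you yourself flag as the main obstacle is exactly where the content of the proof lives — and your sketch does not supply it. First, the class $\mathcal L$ of allowed components is strictly larger than Thompson's list of minimal simple groups: it contains $\PSL_2(p^{2^a})$ for every $a\geq 0$, for instance $\Alt{6}\simeq\PSL_2(9)$, which has $\Alt{5}$ as a proper nonsolvable subgroup. Your plan to replace any non-minimal component by a minimal simple subgroup therefore cannot work as stated: the two conjugacy classes of $\Alt{5}$ in $\Alt{6}$ are fused by an outer automorphism, so $\Alt{5}$ does not extend from $\Alt{6}$ to $\Aut{\Alt{6}}$, and the normalizer of a product of copies of $\Alt{5}$ inside a socle $\Alt{6}^r$ need not supplement the socle in $H$ — the higher nonabelian layers are then lost. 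Likewise the goal $k_i=1$ is not part of Definition~\ref{minimal} and is false for the prototype: the $n$-fold wreath power of $\Alt{5}$ is $n$-rarefied and its nonabelian chief factors are copies of $\Alt{5}^{5^i}$; only the components (composition factors), not the chief factors, are required to be simple groups in $\mathcal L$.

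Second, the lifting step. The mechanism needed is not complementation (Schur--Zassenhaus and Gasch\"utz-type arguments live on solvable layers and do not help here) but an extension/Frattini argument: inside each component $S\notin\mathcal L$ of the socle one must find a proper \emph{self-normalizing} subgroup $K\in\Lambda_1$ whose $S$-conjugacy class is $\Aut{S}$-invariant (Lemma~\ref{simple1}, a case-by-case CFSG analysis; this is also what dictates the precise shape of $\mathcal L$). Then the product $K^*$ of the conjugate copies of $K$ satisfies $G=S^*N_G(K^*)$ and $N_G(K^*)/N_{S^*}(K^*)\simeq G/S^*$ (Lemmas~\ref{extend} and~\ref{normalizer}), so all higher nonabelian layers survive while the bottom layer is replaced by one whose components are a step closer to $\mathcal L$. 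The three conditions on $K$ — self-normalizing, in $\Lambda_1$, and extending to $\Aut{S}$ — are each essential to preserving the nonsolvable length, and none of them is produced by ``controlling $N_H(\cdot)/C_H(\cdot)$'' in a downward induction. This is the missing idea; by comparison, the remaining ingredients of the paper's proof (Lemma~\ref{mono} to arrange a unique minimal normal subgroup at every level, minimal supplements to dispose of a nontrivial solvable radical, and the Frattini conditions) are closer to the routine manipulations you describe.
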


\noindent
The proof relies on the Classification of Finite Simple Groups.

\noindent
Due to the peculiar structure of $n$-rarefied groups, Theorem \ref{main} plays a 
central r\^{o}le whenever we deal with questions related to the nonsolvable length 
that can be reduced to subgroups. In the last section of the paper we 
provide some examples of this situation. In particular, we use the above ideas to 
improve the bound on $\lambda(G)$, obtained in \cite[Theorem 1.1.(a)]{khukhro}\\

\noindent 
{\bf Theorem \ref{bound1} } {\sl Let $G$ be any finite group. Then $\lambda(G)\leq 
L_2(G).$}\\

Here $L_2(G)$ denotes the maximum of the $2$-lengths of all possible
solvable subgroups of $G$. Also, along the lines of \cite{LMS}, 
where the authors study group properties that can be detected from the 
$2$-generated subgroups, we prove \\

\noindent 
{\bf Theorem \ref{generators} }{\sl 
Let $G$ be any finite group. If $G$ is not solvable then there exists a 
$2$-generator subgroup $H$ of $G$ such that $\lambda(H)=\lambda(G)$.}\\

The last application of Theorem \ref{main} concerns the nonsolvable length of 
subgroups of $H=\mathrm{Sym}(m)$ resp. of $H=\mathrm{GL}(m, \mathbb F)$, for any fixed  
natural number $m$ and any field $\mathbb F$. 
Denoting by $\lambda(m)$ resp. $\lambda_{\mathbb{F}}(m)$ 
the maximum value of $\lambda(G)$ when $G$ is a subgroup of $H$,
we prove the following\\

\noindent 
{\bf Theorem \ref{lambda(m)} }{\sl 
For every $m\geq 5$ we have $\lambda(m)=\lfloor \log_5(m)\rfloor$.}\\

\noindent 
{\bf Theorem \ref{lambda_F(m)} }{\sl  
For every $m\geq 2$ and every field $\mathbb{F}$ with at least four elements, 
we have $\lambda_{\mathbb{F}} (m)=1+\lfloor \log_5(m/2)\rfloor$. 
When $\vert \mathbb{F}\vert \leq 3$, then 
$1+\lfloor \log_5(m/3)\rfloor \leq \lambda_{\mathbb{F}} (m)\leq 1+\lfloor \log_5(m/2)\rfloor$. }

\section{Preliminaries}

In this section we set up the basic definitions and notation, and collect a series 
of technical facts to  be used in the proof of the main result (Theorem 
\ref{main}). 

\begin{defi}
Let $G$ be any finite group. Define
$$
R(G)=\langle B\mid B \textrm{ is a  normal solvable  subgroup of } G\rangle.
$$

\end{defi}

The subgroup $R(G)$ is the \emph{solvable radical} of $G$. It is clearly normal and 
solvable, and $G/R(G)$ does not have any nontrivial normal solvable subgroup.


\begin{defi} Let $G$ be any finite group. Define
$$
S(G)=\langle A\mid A \textrm{ is a minimal normal nonabelian subgroup of } G\rangle.
$$
\end{defi}

If  $S(G)\not=1$, then it  is the direct product of nonabelian simple groups. 
All its  factors are subnormal in $G$ and they are called the \emph{components} 
of $G$. It is easy to check that $S(G)$ is the subgroup generated by all the non 
abelian simple subnormal subgroups of $G$.

A series can then be defined as follows.

\begin{defi}Let $G$ be any finite group. Set
$R_1(G)=R(G)$ and define $S_1(G)$ by the equation
$$ S_1(G)/R_1(G)=S(G/R_1(G)). $$
Once $R_i(G)$ and $S_i(G)$ have been defined for all $i<n$, the subgroups $R_n(G)$ 
and $S_n(G)$ are determined by the equations

$$ R_n(G)/S_{n-1}(G)= R(G/S_{n-1}(G)) $$
and
$$ S_n(G)/R_n(G)= S(G/R_n(G)). $$
We call this series the \emph{$RS$}-series of $G$.
\end{defi}

The series thus defined always reaches $G$. 
If $G=R_{n+1}(G)>R_n(G)$, we say that $G$ has 
\emph{nonsolvable length} $\lambda(G)=n$ (according to the notation introduced in 
\cite{khukhro}). Also, for $n\geq 1$ we denote by $\Lambda_n$ the class 
of finite groups $G$ of nonsolvable length $n$. 


Note that the class of groups having nonsolvable length zero coincides with 
the one of solvable groups. Also, simple/quasisimple/almost simple groups, 
as well as their direct products are all groups of nonsolvable length one. 
A tipical example of a group of nonsolvable length $n$ is an $n$-fold wreath 
product of a fixed nonabelian simple group.

The next Lemma collects some easy but useful observations. 
The proof is left to the reader.
\begin{lemma}\label{trivial}
Let $G$ be a finite group and assume that $\lambda(G)=n$. 
The following hold.
\begin{enumerate}
\item  $\lambda(G/R_i(G))=n-i+1$, for all $i=1, \dots, n$.
\item  $\lambda(G/S_i(G))=n-i$, for all $i=1, \dots, n$.
\item  If $N\trianglelefteq G$, then $\lambda(G/N)\leq 
\lambda(G)\leq 
\lambda(N)+\lambda(G/N)$.
\item  $C_{G/R_i(G)}(S_i(G)/R_i(G))=1$, for all $i=1, \dots , n$.
\item  If $H$ is a subnormal subgroup of $G$ then $R_i(H)\leq R_i(G)$ 
       and $S_i(H)\leq S_i(G)$, for all $i=1,\ldots,n$.
\item The $RS$-series of $G$ is a shortest series whose 
factors are either solvable or semisimple.
\end{enumerate}
\end{lemma}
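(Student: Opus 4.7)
My plan is to proceed through the six items in order, exploiting the recursive definition of the $RS$-series and a single key observation: the quotient $G/R_i(G)$ always has trivial solvable radical, since by definition $R_i(G)/S_{i-1}(G) = R(G/S_{i-1}(G))$ is the \emph{full} solvable radical of $G/S_{i-1}(G)$.

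For items (1) and (2), I would verify by induction on $j$ the identities
\[ R_j(G/R_i(G)) = R_{i+j-1}(G)/R_i(G), \qquad S_j(G/R_i(G)) = S_{i+j-1}(G)/R_i(G), \]
starting from $R_1(G/R_i(G)) = 1$ (by the key observation) and $S_1(G/R_i(G)) = S_i(G)/R_i(G)$ (from the definition). The condition $G = R_{n+1}(G) > R_n(G)$ transports to the analogous condition in each quotient, yielding $\lambda(G/R_i(G)) = n-i+1$; an identical calculation in $G/S_i(G)$, now starting from $R_1(G/S_i(G)) = R_{i+1}(G)/S_i(G)$, gives (2). For (3), the forward inequality comes from pushing the $RS$-series of $G$ through $G \to G/N$, since quotients of solvable groups are solvable and quotients of direct products of nonabelian simple groups are again of this form; the reverse inequality follows from concatenating the $RS$-series of $N$ (its terms are characteristic in $N$, hence normal in $G$) with the preimage of one for $G/N$, obtaining a normal series with $\lambda(N)+\lambda(G/N)$ nonsolvable factors, and then invoking (6). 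For (4), note that $S_i(G)/R_i(G) = S(G/R_i(G))$ is in fact the full socle of $G/R_i(G)$ (all minimal normals are nonabelian, as the solvable radical is trivial); a nontrivial centralizer of the socle would contain a minimal normal subgroup of $G/R_i(G)$, which lies both in the socle and in its centralizer, hence is centralized by itself — impossible for a nonabelian simple product.

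Item (5) is the subtlest. First I settle $i=1$ by induction on the length of a subnormal chain $H = H_0 \trianglelefteq H_1 \trianglelefteq \cdots \trianglelefteq H_k = G$: $R(H)$ is characteristic in $H \trianglelefteq H_1$, hence normal solvable in $H_1$, so $R(H) \leq R(H_1)$; inductively $R(H_1) \leq R(H_2) \leq \cdots \leq R(H_k) = R(G)$. Likewise, nonabelian simple subnormal subgroups of $H$ are subnormal in $G$, hence components of $G$, giving $S(H) \leq S(G)$. For general $i$, I pass to $\bar G = G/S_{i-1}(G)$ with $\bar H = HS_{i-1}(G)/S_{i-1}(G)$ (subnormal in $\bar G$, using the inductive $S_{i-1}(H) \leq S_{i-1}(G)$), apply the base case to get $R(\bar H) \leq R(\bar G) = R_i(G)/S_{i-1}(G)$, and lift back: the image of $R_i(H)$ in $\bar H$ is solvable (a quotient of the solvable $R_i(H)/S_{i-1}(H)$) and $\bar H$-normal (since $R_i(H)$ is characteristic in $H$), hence in $R(\bar H)$. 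A parallel argument in $G/R_i(G)$ handles $S_i$, using that any normal semisimple subgroup of a subnormal subgroup of $G/R_i(G)$ is a product of components.

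Finally, for (6), let $1 = G_0 \trianglelefteq \cdots \trianglelefteq G_m = G$ be any normal series with $k$ nonsolvable (semisimple) factors; I show $k \geq \lambda(G) = n$ by induction on $n$. Locate the first nonsolvable factor $G_j/G_{j-1}$: then $G_{j-1}$ is a solvable normal subgroup of $G$, so $G_{j-1} \leq R_1(G)$; the image of $G_j$ in $G/R_1(G)$ is a normal semisimple subgroup of a group with trivial solvable radical, and thus lies in the socle $S_1(G)/R_1(G)$, whence $G_j \leq S_1(G)$. The induced series of $G/S_1(G)$ has $k-1$ nonsolvable factors, and $\lambda(G/S_1(G)) = n-1$ by (2), so induction gives $k-1 \geq n-1$. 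The main obstacle of the lemma is (5) — the careful quotient-and-lift bookkeeping required to transport characteristic $RS$-terms through subnormal chains while keeping the solvable-radical and socle identifications intact at each level.
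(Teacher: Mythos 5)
The paper states this lemma with ``the proof is left to the reader,'' so there is no proof in the paper to compare against. Your argument is correct and is the standard one the authors clearly have in mind: the identifications $R_j(G/R_i(G))=R_{i+j-1}(G)/R_i(G)$ and $S_j(G/R_i(G))=S_{i+j-1}(G)/R_i(G)$ for (1)--(2), socle plus self-centralization for (4), the characteristic-subgroup chain argument through a subnormal series for (5), and induction via the first nonsolvable factor for (6), with (3) then following from (6); all the steps check out.
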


In the course of our analysis of $\Lambda_n$-groups 
acting on sets/vector spaces we will  need the following result. 
\begin{lemma}\label{subdirect3}  Let $G$ be a finite group with  normal 
subgroups $N_1, N_2,\ldots, N_r$ such that $\bigcap_{i=1}^rN_i=1$. 
Then $\lambda(G)=\max\{\lambda(G/N_i)\vert i=1, \dots, r\}$.
\end{lemma}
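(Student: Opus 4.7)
One direction, $\lambda(G)\geq\max_i\lambda(G/N_i)$, is immediate from Lemma~\ref{trivial}(3), since each $G/N_i$ is a quotient of $G$. For the reverse inequality I would prove $\lambda(G)\leq n:=\max_i\lambda(G/N_i)$ by induction on~$n$. The base case $n=0$ is immediate: since $\bigcap_iN_i=1$, the group $G$ embeds diagonally into the solvable group $\prod_iG/N_i$ and hence is solvable.

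For the inductive step $n\geq 1$, write $G_i=G/N_i$ with projection $\phi_i\colon G\to G_i$ and pull back the first two terms of the $RS$-series of each $G_i$:
\[
K_i=\phi_i^{-1}(R(G_i)),\qquad L_i=\phi_i^{-1}(S_1(G_i)),\qquad K=\bigcap_iK_i,\qquad L=\bigcap_iL_i.
\]
All four are normal in $G$, and $K\leq L$. The quotient $G/L$ embeds subdirectly into $\prod_iG_i/S_1(G_i)$, whose factors have nonsolvable length at most $n-1$ by Lemma~\ref{trivial}(2); the inductive hypothesis applied to $G/L$ (with the normal subgroups $L_i/L$) therefore gives $\lambda(G/L)\leq n-1$. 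By Lemma~\ref{trivial}(3) it suffices to prove $\lambda(L)\leq 1$.

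For this I analyse $L$ through its normal subgroup $K$. First, $K$ embeds subdirectly into $\prod_iR(G_i)$, so $K$ is solvable. Second, $L/K$ embeds subdirectly into $\prod_iLK_i/K_i$; each factor is a normal subgroup of $G/K_i\cong G_i/R(G_i)$ contained in the semisimple normal subgroup $L_i/K_i=S_1(G_i)/R(G_i)$. Any such normal subgroup is a direct product of $G/K_i$-orbits of components (by the standard simplicity argument: for each component $T$, $H\cap T\trianglelefteq T$ forces $T\leq H$ or $H\cap T=1$), so $LK_i/K_i$ is itself semisimple.

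The main obstacle is then to upgrade this to the semisimplicity of $L/K$ itself. It reduces to a Goursat-type statement: \emph{any subdirect subgroup $H$ of a finite direct product $T_1\times\cdots\times T_r$ of nonabelian simple groups is itself semisimple}. Goursat's lemma forces $H\cap T_1\trianglelefteq T_1$, and by simplicity of $T_1$ either $T_1\leq H$---in which case $H=T_1\times H'$ with $H'$ a subdirect subgroup of $T_2\times\cdots\times T_r$---or $H$ embeds subdirectly into $T_2\times\cdots\times T_r$; induction on $r$ concludes. Applied to the subdirect embedding $L/K\hookrightarrow\prod_iLK_i/K_i$ (refined to the individual simple factors), this yields that $L/K$ is semisimple, hence $\lambda(L)\leq\lambda(K)+\lambda(L/K)\leq 0+1=1$ by Lemma~\ref{trivial}(3). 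A final application of the same lemma gives $\lambda(G)\leq\lambda(L)+\lambda(G/L)\leq 1+(n-1)=n$, closing the induction.
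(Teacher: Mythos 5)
Your proof is correct and follows essentially the same route as the paper: both arguments pull back the solvable radical and the socle from each quotient $G/N_i$, intersect, bound the two resulting layers by $1$ and $n-1$ respectively, and rest on the fact that a subdirect subgroup of a direct product of nonabelian simple groups is semisimple (which the paper cites as \cite[Lemma 4.3A]{dixon} and you prove inline via the Goursat argument). The only organizational differences are that you induct on $n$ rather than on a minimal counterexample in $|G|$, and that by establishing $\lambda(L/K)\leq 1$ directly you avoid the paper's preliminary reduction to trivial solvable radical and its case split $K=G$ versus $K\neq G$.
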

\begin{proof} For every $i=1,\ldots, r$ call $G_i=G/N_i$ and set also $n$ the greatest of 
all $\lambda(G_i)$. 
Since $G$ has an homorphic image of nonsolvable length $n$, we have 
by Lemma \ref{trivial}(3), $\lambda(G)\geq n$.
Note that $\lambda(G)\leq n$ is of course true if $\lambda(G)=0$ or $1$, since 
if all the $G_i$ are solvable groups then $G$ itself is solvable. 
Assume that $G$ is a minimal counterexample and therefore $\lambda(G)>1$.   
For each $i$ let $A_i$ be such that $A_i/N_i$ is the solvable radical of $G_i$, 
and let $A=\cap_{i=1}^rA_i$. Then, since $A$ embeds into the direct product 
$\prod_{i=1}^r A_i/N_i$, $A$ is solvable and, being normal, $A\leq R_1(G)$. 
For every $i$, $R_1(G)N_i/N_i$ is a solvable normal subgroup of $G_i$, therefore 
$R_1(G)\leq A_i$ for every $i$, hence $R_1(G)=A$. 
Now, if $R_1(G)\not=1$, the claim is proved by induction because 
$\lambda(G)=\lambda(G/R_1(G))$ and $\lambda(G_i)=\lambda(G/A_i)$ 
for all $i=1,\ldots,r$. We assume therefore that $R_1(G)=1$ and moreover, 
being  $G$ a subdirect product of the groups $G/A_i$, we can assume 
that each $G/N_i$ has trivial solvable radical, i.e. $N_i=A_i$ for every $i$. 
For each $i$ let $K_i$ be the subgroup of $G$ defined by $K_i/N_i=S_1(G_i)$ and 
let $K=\bigcap_{i=1}^r K_i$. Of course, as $S_1(G)N_i\leq K_i$ for each $i$, we have 
that $S_1(G)\leq K$. In particular, $G/K$ has order strictly smaller than 
the order of $G$ and it is a subdirect product of the groups 
$G/K_i\simeq G_i/S_1(G_i)$, for $i=1,\ldots,r$. Hence by the inductive assumption 
we have that $\lambda(G/K)=\max\{\lambda(G_i/K_i)\vert i=1,\ldots, r\}=n-1$.  
Also $K$ is a subdirect product of the groups $K_i/N_i$, for $i=1,\ldots,r$. Hence 
if $K\neq G$, by the inductive assumption we have that 
$\lambda(K)=\max\{\lambda(K_i/N_i)\vert i=1,\ldots, r\}=1$. 
By Lemma \ref{trivial} {\it (3)} it follows that $\lambda(G)\leq n$. 
Therefore we have that $G=K=K_i$ for every $i=1,\ldots,r$. 
In particular, $G_i=S_1(G_i)$ for every $i$ and so $G$ is a subdirect 
product of semisimple groups. 
By \cite[Lemma 4.3A]{dixon} $G$ is a direct product of simple groups, then 
$\lambda(G)=1=\max\{\lambda(G_i)\vert i=1,\ldots,r\}$ which completes the proof.
\end{proof} 

\section{Some technical results on finite simple groups}

Given two finite groups $X$ and $Y$ with $X\nor Y$, let $K$ be a subgroup of $X$. 
We say that $K$ {\it extends from $X$ to $Y$} if $Y= XN_Y(K)$. This is equivalent 
to say that the $Y$-conjugacy class of $K$, which we denote by 
$[K]_Y=\{K^y\vert y\in Y\}$,  coincides with the $X$-conjugacy class, $[K]_X$.
If, for example, $X$ contains a unique conjugacy class of subgroups isomorphic
to $K$, then $K$ extends from $X$ to $Y$ by a Frattini argument.\\ 
The proof of the following Lemma is left to the reader.

\begin{lemma}\label{ext_basic}
Assume that $K, X$ and $Y_1$ are subgroups of a group $Y$ such that $K < X < Y_1 < Y$.
\begin{enumerate}
\item If $K$ extends from $X$ to $Y$, 
then $K$ extends from $X$ to $Y_1$.
\item If $K$ is maximal in $X$ and $K$ extends from $X$ to $Y$, then $N_Y(K)$ is
maximal in $Y$ provided $K$ is not normal in $Y$.
\end{enumerate}
\end{lemma}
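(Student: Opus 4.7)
The plan is to unwind the definition: $K$ extends from $X$ to $Y$ means exactly $Y = X N_Y(K)$. Since $X \nor Y$ and $X < Y_1 < Y$, we automatically have $X \nor Y_1$, so the statement ``$K$ extends from $X$ to $Y_1$'' is well-defined and amounts to $Y_1 = X N_{Y_1}(K)$.

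For part (1), I would take an arbitrary $y_1 \in Y_1 \subseteq Y$ and, using $Y = X N_Y(K)$, write $y_1 = x n$ with $x \in X$ and $n \in N_Y(K)$. Because $X \leq Y_1$, the element $n = x^{-1} y_1$ already lies in $Y_1$, hence in $Y_1 \cap N_Y(K) = N_{Y_1}(K)$. This gives $y_1 \in X N_{Y_1}(K)$, and the reverse inclusion is immediate, so $Y_1 = X N_{Y_1}(K)$.

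For part (2), set $N = N_Y(K)$; by hypothesis $K$ is not normal in $Y$, so $N$ is a proper subgroup. To show $N$ is maximal, I would pick any subgroup $H$ with $N \leq H \leq Y$ and analyze $H \cap X$. Since $K = N \cap X \leq H \cap X \leq X$ and $K$ is maximal in $X$, either $H \cap X = X$ or $H \cap X = K$. In the first case, $X \leq H$ forces $H \supseteq X N = Y$, whence $H = Y$. In the second case, for any $h \in H$ the factorization $Y = X N$ yields $h = x n$ with $x \in X$, $n \in N$; then $x = h n^{-1} \in H \cap X = K$, so $h \in K N = N$ and $H = N$.

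The argument is essentially bookkeeping; the only real content is recognizing that $X \leq Y_1$ lets one stay inside $Y_1$ when splitting elements according to $Y = X N_Y(K)$, and that maximality of $K$ in $X$ provides exactly the dichotomy needed to analyze $H \cap X$. There is no genuine obstacle.
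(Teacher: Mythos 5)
The paper leaves this lemma's proof to the reader, so there is nothing to compare against; your argument is correct and is exactly the routine verification intended. One small remark: the asserted equality $K = N_Y(K)\cap X$ in part (2) would itself need the observation that $K$ is not normal in $X$ (which follows since $K\nor X$ together with $Y=XN_Y(K)$ would force $K\nor Y$), but your proof only ever uses the trivial containment $K\leq H\cap X$, so nothing is missing.
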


Beside the standard notation generally used in literature, we shall also 
make use of the one estabilished in the Atlas (\cite{atlas}), e.g.:
\begin{itemize}
\item $n$ denotes both a natural number and the cyclic group of that order;
\item $A.B$ denotes an extension of the group $A$ by the group $B$;
\item $A:B$ denotes a split extension;
\item $A^{\cdot} B$ denotes a non-split extension.
\end{itemize} 
The notation for sporadic simple groups and simple groups of Lie type 
follows the Atlas, while $\Alt{m}$ and $\Sym{m}$ denote respectively 
the alternating and symmetric group of degree $m$. 
We refer to \cite{kleidman_liebeck} for the definition of the Aschbacher's classes 
${\mathcal{C}}_i$ (for $i=1,\ldots,8$) of natural subgroups of classical groups.\\

From now on $\mathcal{L}$ will denote the class consisting of the following simple groups:
\begin{itemize}
\item $\PSL_2(q)$ for $q\in\{2^r,\, 3^r,\, p^{2^a}\}$ with $r$ a prime, $p$ an odd 
prime and $a\geq 0$,
\item $ \PSL_3(3)$,
\item $\null^2B_2(2^s)$ for $s$ an odd prime.
\end{itemize} 
Note that $\mathcal{L}$ contains $\Alt{5} \simeq \PSL_2(4)$ and 
$\Alt{6}\simeq \PSL_2(9)$. Moreover, every minimal simple group 
(i.e. every simple group whose proper subgroups are all solvable) lies in 
$\mathcal{L}$ (see \cite[Corollary 1]{thompson}).

In the proof of Lemma  \ref{normalizer} we need a technical result 
about the existence of certain subgroups in finite nonabelian 
simple groups not belonging to $\mathcal{L}$.
The proof of this result relies on the classification of finite simple groups.

\begin{lemma}\label{simple1}
Let $S$ be a nonabelian finite simple group. 
Assume that $S$ does not belong to $\mathcal{L}$.
Then $S$ contains a proper
self-normalizing subgroup $H$ which belongs to $\Lambda_1$ and 
such that it extends to $\mathrm{Aut}(S)$.
\end{lemma}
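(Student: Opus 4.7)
The plan is to argue case-by-case through the Classification of Finite Simple Groups, and in each case to produce a maximal, nonsolvable subgroup $H<S$ whose $S$-conjugacy class is invariant under $\Aut{S}$. The $H$ produced will always be either almost simple or a parabolic with one semisimple Levi factor, so $\lambda(H)=1$ and $H\in\Lambda_1$; since $H$ is maximal and not normal in the simple group $S$, we have $N_S(H)=H$, i.e.\ $H$ is self-normalizing; and since $\Aut{S}$ fixes the $S$-class of $H$, a Frattini argument yields $\Aut{S}=S\cdot N_{\Aut{S}}(H)$, which says exactly that $H$ extends from $S$ to $\Aut{S}$ in the sense of the preceding subsection.

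For $S=\Alt{m}$ with $m\ge 7$ I will take $H=\Alt{m-1}$, the stabilizer of a point: maximal, nonsolvable, and with a single $S$-class fixed by $\Aut{S}=\Sym{m}$. The cases $\Alt{5}\cong\PSL_2(4)$ and $\Alt{6}\cong\PSL_2(9)$ lie in $\mathcal{L}$ and are not at issue. For each of the $26$ sporadic simple groups, inspection of the ATLAS yields a nonsolvable maximal subgroup which either occurs in a single $S$-class or whose $S$-class is explicitly fixed by $\Out{S}$.

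For the groups of Lie type of Lie rank $\ge 2$, the natural choice is a maximal parabolic $P=LU$ whose Levi complement $L$ contains a (quasi)simple factor; apart from the single small exception $\PSL_3(3)\in\mathcal{L}$, such a $P$ exists and is nonsolvable. Maximal parabolics are always self-normalizing, and their $S$-classes correspond to orbits of nodes of the Dynkin diagram under the graph automorphisms of $S$, so choosing a node (or orbit) fixed by every diagram symmetry of $S$ makes the class of $P$ invariant under $\Aut{S}$.

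The main obstacle lies in the rank-$1$ case, since the Borel subgroups of $\PSL_2(q)$, $\PSU_3(q)$, $\null^2B_2(q)$ and $\null^2G_2(q)$ are solvable. Here one uses subfield or involution-centralizer type maximal subgroups. For $S=\PSL_2(p^n)\notin\mathcal{L}$, the explicit form of $\mathcal{L}$ forces the existence of a proper divisor $m\mid n$ with $p^m\ge 4$, so by Dickson's classification the normalizer $H=N_S(\PSL_2(p^m))$ is a maximal, almost simple subgroup whose class is $\Aut{S}$-invariant. The Suzuki case $\null^2B_2(2^s)\notin\mathcal{L}$ (so $s$ is odd and composite) is handled analogously by a subfield subgroup $\null^2B_2(2^t)$ with $t\mid s$ and $3\le t<s$; for $\PSU_3(q)$ and $\null^2G_2(q)$, which never belong to $\mathcal{L}$, one can take a maximal subgroup of type $\mathrm{GU}_2(q)$ respectively $C_S(i)\cong 2\times\PSL_2(q)$ for an involution $i\in S$. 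The bookkeeping here, and in particular the verification that the list $\mathcal{L}$ is exactly the set of exceptions to the existence of a maximal subgroup with the three required properties, is where the real work of the proof lies.
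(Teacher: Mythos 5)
Your overall strategy --- a CFSG case division, with point stabilizers for $\Alt{m}$, ATLAS inspection for the sporadics, invariant maximal parabolics in rank $\ge 2$, and subfield or involution-centralizer subgroups in rank $1$ --- is the same as the paper's, and your alternating, sporadic and rank-$1$ branches essentially reproduce what is done there. But the central claim of your Lie-type branch, namely that apart from $\PSL_3(3)$ every group of rank $\ge 2$ admits a nonsolvable maximal parabolic whose $S$-class becomes $\Aut{S}$-invariant by choosing a node (or node-orbit) fixed by the diagram symmetries, is false, and the cases where it fails are not mere bookkeeping: they are infinite families requiring subgroups of a different kind. For $S=\PSL_3(q)$ with $q\ge 4$ the graph automorphism swaps the two nodes of $A_2$, so the only $\Aut{S}$-invariant parabolic is the Borel, which is solvable; the paper instead takes $H=P_{\hat 1}\cap P_{\hat 1}^{\iota}$, the \emph{non-parabolic} stabilizer of a decomposition $V=U\oplus W$ with $\dim U=1$. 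For $S=\PSp_4(2^r)$, $G_2(3^r)$ and $F_4(2^r)$ the extraordinary graph automorphism interchanges the two root lengths, so again no proper invariant parabolic exists beyond the Borel; the paper resorts to ${}^2B_2(2^r)$ or $\PSp_4(q^{1/2})$, to ${}^2G_2(q)$ or $G_2(q^{1/2})$, and to a self-normalizing but \emph{non-maximal} subgroup $\mathrm{Sp}_4(q^2).2$ respectively --- the last of these also undercuts your assertion that $H$ can always be taken maximal. Even where a fixed node exists the resulting Levi can be solvable: for $\OO_8^+(q)$ with $q\in\{2,3\}$ triality fixes only the node whose Levi has type $A_1^3$, and the paper must use $\PSU_3(3){:}2$ resp.\ $\PSL_3(3){:}2$ instead.

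A second, smaller gap: for $\PSU_3(q)$ your choice of a non-isotropic point stabilizer (type $\mathrm{GU}_2(q)$) is solvable when $q=3$, since it only involves $\mathrm{SL}_2(3)$; the paper treats $\PSU_3(3)$ separately via a maximal $\PSL_2(7)$. Your analysis of $\PSL_2(p^n)\notin\mathcal{L}$ and of ${}^2B_2(2^s)$ via subfield subgroups is correct and matches the paper, as does the Frattini-argument mechanism for extension. To repair the proof you would need to carry out, for each family admitting a graph automorphism, an explicit construction of an $\Aut{S}$-invariant class of self-normalizing $\Lambda_1$-subgroups outside the parabolic lattice --- which is precisely the longest part of the paper's argument.
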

\begin{proof}$\null$

\noindent
{\it Case $S$ alternating.} 

Let $S=\Alt{n}$ with $n\geq 7$.
We may choose $H$ to be the stabilizer of a point. 
Then $H$ is a maximal subgroup of $S$ isomorphic to $\Alt{n-1}$, 
hence $N_S(H)=H\in \Lambda_1$. 
Trivially $H$ exteds to $\Aut{S}=\Sym{n}$.\\

\noindent
{\it Case $S$ a simple group of Lie type in characteristic $p$.} 

Let $G$ be the extension of $S$ by the diagonal and field
automorphisms. We consider separately the two cases: 
\begin{enumerate}
\item $G=\Aut{S}$,
\item $G<\Aut{S}$.
\end{enumerate} 
Case 1. 
By general $BN$-pair theory 
\cite[Proposition 8.2.1 and Theorem 13.5.4]{carter}, 
the lattice of proper overgroups in $S$ of a Borel subgroup $B=N_S(U)$ 
(where $U$ is a fixed Sylow $p$-subgroup of $S$) consists of the 
parabolic subgroups of $S$. In particular the maximal parabolic subgroup,
$P_{\hat{1}}$, that is correlated to the first node of the 
Dynkin diagram (or with the orbit of nodes labelled $1$ in the twisted case), 
is a maximal subgroup of $S$.  
Now the group $G$ admits a $BN$-pair too, whose 
Borel subgroup is $N_{G}(U)$, since in order to construct $G$ 
from $S$ we can choose diagonal and field automorphisms that 
normalize every root subgroup of $U$. 
%
%
In particular, $P_{\hat{1}}$ extends to $G$. 
As long as $P_{\hat{1}}\in \Lambda_1$, we may therefore choose $H=P_{\hat{1}}$ 
to prove the Lemma. 
Note that $P_{\hat{1}}=U_{\hat{1}}L_{\hat{1}}$ is the product of a $p$-subgroup, 
its unipotent radical $U_{\hat{1}}$, by the Levi complement $L_{\hat{1}}$, which 
is a central product of groups of Lie type corresponding to the subdiagram obtained by 
deleting the first node (the first orbit of nodes). Thus 
$P_{\hat{1}}\in \Lambda_1$ if and only if $L_{\hat{1}}$ does and this almost 
always happens. The only exceptions are when the Dynkin diagram is a 
point, or an orbit of two points (in the twisted case), or a pair of points 
and $q\in\{2,3\}$, because this are the cases in which the group $\PSL_2(q)$ 
is solvable. 
Namely, $H$ cannot be chosen to be $P_{\hat{1}}$ in the following situations:
\begin{enumerate}
\item[i.] $\PSL_2(q)$ for all $q\geq 4$,
\item[ii.] $\PSU_3(q)$ for all $q\geq 3$,
\item[iii.] $\PSU_4(3),\, \PSU_5(2), \, \PSU_5(3),$ 
\item[iv.] $^2B_2(2^r)$ with $r$ odd and $2^r\geq 8$,
\item[v.] $^2G_2(3^r)$ with $r$ odd and $3^r\geq 27$,
\item[vi.] $\PSp_4(3),\,  ^3D_4(2),\, ^3D_4(3)$,
\end{enumerate} 
since $\PSp_4(2)\simeq \Sym{6}$, $\PSU_3(2)\simeq 3^2:Q_8$, 
$\PSU_4(2)\simeq \PSp_4(3)$, $G_2(2)\simeq \PSU_3(3).2$, 
$^2B_2(2)\simeq 5:4$, $^2G_2(3) \simeq \PSL_2(8).3$ and 
$^2F_4(2)$ has a simple normal subgroup of index 2.

i. Let $S=\PSL_2(q)$ for $q=p^f\geq 4$. We may now consider maximal subgroups of $S$ 
belonging to Aschbacher's class $\mathcal{C}_5$. In particular, if $p=2$ and $f$ is not 
a prime there exists a maximal subgroup $H$ isomorphic to $\PSL_2(q_0)$, 
with $q=q_0^r$ for some prime $r$ dividing $f$, that extends to $\Aut{S}$ 
(see for instance \cite[Table 8.1]{colva}). Similarly, if $p$ is odd and there 
exists an odd prime $r$ dividing $f$, then there is a maximal subgroup $H$ 
isomorphic to $\PSL_2(q_0).2$, with $q=q_0^r$, that extends to $\Aut{S}$. 
Note that such a subgroup lies in $\Lambda_1$, provided that $q_0\neq 3$. 
Thus being
$$\{\PSL_2(2^r),\, \PSL_2(3^r),\, 
\PSL_2(p^{2^a})\vert r,p \textrm{ primes},\, p \textrm{ odd},\, a\geq 0  \}
\subseteq \mathcal{L}$$
the Lemma is proved in this case.

We treat together ii. and iii. (and in general every unitary case). \\
Let $S=\PSU_n(q)$, with $n\geq 3$. Let $H$ be the stabilizer of a non-isotropic 
point (in the natural unitary $n$-dimensional space). Then  $H$ is a maximal 
subgroup of $S$ and it extends to $\Aut{S}$ 
(see \cite[Table 3.5.B]{kleidman_liebeck} and \cite[Table 8]{colva}). 
Also, $H'$ is a cyclic extension of $\PSU_{n-1}(q)$, therefore $H\in\Lambda_1$,
whenever $(n,q)\ne (3,3)$. For $S=\PSU_3(3)$, a direct inspection in 
\cite{atlas} suggests that we may take $H$ to be the maximal subgroup $\PSL_2(7)$.

iv. Let $S= \null^2B_2(2^r)$ with $r\geq 3$ odd and composite. Then if $l$ is a prime divisor
of $r$ the centralizer in $S$ of a field automorphism 
of order $l$ is a maximal 
subgroup of $S$, isomorphic to $^2B_2(2^{r/l})$ that extends to $\Aut{S}$ 
(see \cite{suzuki}).

v. Let $S=\null^2G_2(3^r)$ with $r$ odd and $r\geq 3$. Then $S$ has a unique 
class of involutions (see \cite{kleidman_G_2}). Let $i$ be one involution,
then $C_S(i)$ is a maximal subgroup of $S$ isomorphic to $2\times \PSL_2(q)$  
(\cite[Theorem C]{kleidman_G_2}) and therefore it can be chosen as $H$.

Finally consider the three cases in vi.

Let $S=\PSp_4(3)$. In this case we may take $H\simeq 2^4:\Alt{5}$ (see \cite{atlas}). 

Let $S=\, ^3D_4(2)$. A direct inspection in \cite{atlas} suggests that we may take 
$H\simeq 2^9:\PSL_2(8)$.

Let $S=\, ^3D_4(3)$. By \cite{kleidman_3D_4} we may take as $H$
a copy of the maximal subgroup $3^9:\mathrm{SL}_2(27).2$.\\

Case 2. Suppose now that $G<\Aut{S}$, that is, there exist graph 
automorphisms of $S$. Note that this happens exactly 
when $S$ is one of the following groups (see \cite{carter} or \cite{atlas}):

$$\PSL_n(q) \textrm{ for } n\geq 3,\,\, 
\PSp_4(2^r),\,\, 
\OO_{2n}^+(q) \textrm{ for } n\geq 4,\,\, 
G_2(3^r),\,\, 
F_4(2^r),\,\, 
E_6(q).$$

Let $S=\PSL_n(q)$ with $n\geq 3$. Let $\iota$ be a graph automorphism 
acting as the inverse-transpose on the elements of $S$, 
and let $H=P_{\hat{1}}\cap P_{\hat{1}}^{\iota}$.
Note that $H$ is the stabilizer in $S$ of a direct decomposition of the 
natural module $V=U\oplus W$, where $U$ is  $1$-dimensional and 
$W$ $(n-1)$-dimensional. 
In particular we have that $H=N_S(H)$, since $n-1\geq 2$. 
Moreover, $H$ extends to $\Aut{S}$ and $H'$ is a central extension 
of $\PSL_{n-1}(q)$ (see \cite[Prop. 4.1.4]{kleidman_liebeck} for details). 
Therefore, as long as $(n,q)\not\in\{(3,2), (3,3)\}$ the subgroup $H$ lies 
in $\Lambda_1$.
Finally note that $\PSL_3(2)\simeq \PSL_2(7)$ and $\PSL_3(3)$ are minimal 
simple groups and so they belong to $\mathcal{L}$.

Let $S=\PSp_4(2^r)$. Note that $r\geq 2$ as $\PSp_4(2)$ is not simple. 
If $r=2$ then a direct inspection in \cite{atlas} shows that we may 
take $H\simeq \Sym{6}$. Let $r\geq 3$. If $r$ is odd, then $S$ contains 
a unique conjugacy class of maximal subgroups isomorphic to $\null^2B_2(2^r)$, 
so we are done. For $r$ even, one may take $H\simeq \PSp_4(q^{1/2})$, 
which is always a maximal subgroup of $S$, and $[H]_S=[H]_{\Aut{S}}$ 
(see \cite[Table 3.5.C]{kleidman_liebeck} or \cite[Table 8.14]{colva}).
  
Let $S=\OO_{2n}^+(q)$ with $n\geq 4$. If $n\geq 5$, we may take $H=P_{\hat{1}}$. This 
coincides with the stabilizer of an isotropic point. $H$ is a maximal subgroup of $S$ 
that extends to $\Aut{S}$ and such that $H'$ is a central extension of 
$\OO_{2(n-1)}^+(q)$ 
(see \cite[Table 3.5.E]{kleidman_liebeck} or \cite[Table 8]{colva}), thus 
$H\in\Lambda_1$.\\
For $S=\OO_{8}^+(q)$ we refer to \cite{kleidman_POmega_8}. 
If $q>2$ then maximal parabolic subgroups $P_{\hat{2}}$ that correspond to the second 
node of the Dynking diagram form a unique $S$-class of maximal subgroups of $S$, 
which is $\Aut{S}$-invariant. Such subgroups are
stabilizers of totaly singular $2$-subspaces and are denoted by $R_{s2}$ in 
\cite{kleidman_POmega_8}. Their isomorphism type is 
$[q^9].(\frac{1}{d}\mathrm{GL}_2(q)\times \Omega_4^+(q)).d$, 
where $d=(2,q-1)$ (\cite[Prop. 2.2.2]{kleidman_POmega_8}), thus as long as $q>3$, 
$H=P_{\hat{2}}$ satisfies our requirements.\\ 
If $S=\OO_8^+(3)$, we may take $H$ to be an $N_2$-group in the 
notation of \cite[Section 3.2]{kleidman_POmega_8}. Then 
by \cite[Prop. 3.2.3]{kleidman_POmega_8}, $H\simeq \PSL_3(3):2$ and 
it extends from $S$ to $\Aut{S}$ . Moreover $H$ is 
self-normalizing in $S$, as it is a maximal member in the class $\mathcal{C}_2$ 
(\cite[Prop. 4.2.1 and Table III]{kleidman_POmega_8}).\\ 
Finally consider the case $S=\OO_8^+(2)$. According to the 
\cite{atlas} (or again \cite[Section 3.2]{kleidman_POmega_8}) 
we may take $H$ a subgroup isomorphic to $\PSU_3(3):2$. Such subgroup
extends to $\Aut{S}$ and is self-normalizing, as the proper overgroups of these are 
isomorphic to $\PSp_6(2)$.

Let $S=G_2(3^r)$. If $r$ is odd, let $H$ be the centralizer in $S$ of a graph 
involution, $\phi_2$. Then  $H$ is a maximal subgroup of $S$, isomorphic to $^2G_2(q)$ 
and such that $[H]_S=[H]_{\Aut{S}}$ 
(see \cite[Lemma 1.5.5 and Theorem B]{kleidman_G_2}). 
For $r$ even one can take the centralizer of a field automorphism of order two, then 
$H$ is still a maximal subgroup that extends to $\Aut{S}$ and it is isomorphic to 
$G_2(q^{1/2})$.

Let $S=F_4(2^r)$.  
According to \cite[Table 5.1]{liebeck_saxl_seitz}, $S$ admidts a unique 
conjugacy class of subgroups (of maximal rank in $\Aut{S}$) isomorphic to 
$\textrm{Sp}_4(q^2).2$. These subgroups are not maximal in $S$ but they are 
self-normalizing. If $H$ is one of these then $H=N_S(S_1(H))$ and we know 
that $M:=N_{\Aut{S}}(S_1(H))$ is maximal in $\Aut{S}$. Then $H$ is normal 
in $M$ and so $M=M_{\Aut{S}}(H)$ forcing $H=M\cap S=N_S(H)$.

Let $S=E_6(q)$. We can take as $H$ either a maximal parabolic subgroup that 
corresponds to the middle node in the Dynkin diagram or the one below it. \\

\noindent
{\it Case $S$ sporadic or $ ^2F_4(2)'$.} \\
In Table \ref{table_sporadics} we collect our choice for the subgroup $H$
when $S$ is one of the $26$ sporadic simple groups or the Tits group 
$\null^2F_4(2)'$. In each case, 
$H$ is maximal in $S$. The validity of Table \ref{table_sporadics} can be checked 
in the Atlas (\cite{atlas}) and by the use of \cite{wilson} for what concerns the 
Monster group $M$. 

\begin{table}[htbp]
\caption{Case $S$ is sporadic or $^2F_4(2)'$.}
\label{table_sporadics}
\vspace{-0.7cm}
\[\begin{array}{|l|l||l|l||l|l|}
\hline
S  &   H    & S &  H    &  S & H  \cr
\hline
M_{11}    & M_{10}       & M_{24}  & M_{23}     & HN & \Alt{12}   \cr
M_{12}    & \PSL_2(11)   & M^{c}L  & \PSU_4(3)  & Ly & G_2(5)     \cr
J_{1}     & \PSL_2(11)   & He      & 3^{\cdot}\Sym{7}   & Th & M_{10}   \cr
M_{22}    & \PSL_2(11)   & Ru      & ^2F_4(2)  & Fi_{23}& 2^{\cdot}Fi_{22}  \cr
J_2       & \PSU_3(3)    & Suz     & G_2(4)    & Co_1 & Co_2      \cr
M_{23}    & M_{22}       & O'N     & J_1       & J_4  & 2^{11}:M_{24}  \cr
^2F_4(2)' &\PSL_2(25)    & Co_3    & M^{c}L:2  & Fi_{24}' & Fi_{23}  \cr
HS        & M_{22}       & Co_2    & M^{c}L    & B  & Th   \cr
J_3       & \PSL_2(17)   & Fi_{22} & M_{12}    & M  & \PSL_2(29):2  \cr
\hline
\end{array}\]
\end{table}
\noindent
The proof is now complete.
\end{proof}

\noindent 
The following fact will be needed in the last section.

\begin{lemma}\label{simple2} 
Let $S$ be a simple group belonging to $\mathcal{L}$.
Then there exists a $2$-nilpotent subgroup $D$ of $S$, with 
$O_2(D)=1$ and $N_S(D)=D$ that extends to $\Aut{S}$.
\end{lemma}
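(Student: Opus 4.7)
The plan is to treat the three families comprising $\mathcal{L}$ separately and in each case exhibit $D$ as the normalizer in $S$ of an appropriately chosen cyclic subgroup $T$ of odd order at least $3$. In every case $D$ will be either dihedral of order $2|T|$ or a Frobenius extension of $T$, so that $T$ coincides with the normal $2$-complement $O_{2'}(D)$ and is characteristic in $D$; since $|T|$ is odd and at least $3$, this automatically yields $2$-nilpotency and $O_2(D)=1$.

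For $S=\PSL_2(q)$ with $q\geq 4$, the key observation is that at least one of the integers $(q-1)/\gcd(2,q-1)$ and $(q+1)/\gcd(2,q-1)$ is odd: both are odd when $q$ is even, and they are consecutive positive integers when $q$ is odd. Let $m$ be the odd one, let $T$ be a cyclic subgroup of $S$ of order $m$, and set $D:=N_S(T)$. Dickson's classification of the subgroups of $\PSL_2(q)$ then shows that $D$ is dihedral of order $2m$. For $S=\null^2B_2(q)$ with $q=2^s$ and $s$ an odd prime, I would take $T$ to be a maximal torus of order $q-1$ (odd since $q$ is a power of $2$); by Suzuki's description of the maximal subgroups in \cite{suzuki}, $D:=N_S(T)$ is again dihedral, of order $2(q-1)$. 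For $S=\PSL_3(3)$ the cleanest choice is $D:=N_S(P)\cong 13{:}3$, the normalizer of a Sylow $13$-subgroup, which is a Frobenius maximal subgroup of odd order.

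Self-normalization of $D$ then follows uniformly: since $T$ is characteristic in $D$, any $g\in N_S(D)$ normalizes $T$, whence $N_S(D)\leq N_S(T)=D$. For the extension to $\Aut(S)$, I would argue by a Frattini argument in the spirit of Lemma~\ref{ext_basic}: it suffices to check that $T$ lies in a single $S$-conjugacy class of cyclic subgroups of $S$ of its order, and that this class is $\Aut(S)$-invariant. The first point follows from Sylow's theorem in the $\PSL_3(3)$ case, and from the standard conjugacy statement for maximal tori in groups of Lie type in the other two families; the invariance under $\Aut(S)$ is automatic, since outer automorphisms are generated by field and diagonal ones and these preserve the isomorphism type of a maximal torus.

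The main obstacle, and the only place where the argument is not uniform, is the verification of the small base cases of $\PSL_2(q)$ for $q\in\{4,5,7,9\}$, in which the dihedral subgroup $D$ may fail to be maximal in $S$ (for instance, $D_6\leq S_4\leq \PSL_2(7)$ and $D_{10}\leq\Alt{5}\leq\Alt{6}\simeq\PSL_2(9)$). Self-normalization still follows from the general argument above, but I would double-check the extension property in these cases by direct inspection of \cite{atlas}; outside of this handful of exceptions the entire argument is essentially algorithmic.
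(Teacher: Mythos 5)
Your proof is correct as a proof of the statement as written, and for four of the five families it is essentially the paper's argument (normalizer of an odd-order cyclic torus, dihedral of twice-odd order, self-normalizing because the torus is characteristic, extending to $\Aut{S}$ by uniqueness of the conjugacy class plus a Frattini argument); your uniform ``pick whichever of $(q\pm1)/\gcd(2,q-1)$ is odd'' treatment of $\PSL_2(q)$ is in fact tidier than the paper's case split, and it silently replaces the paper's ad hoc choices at $q=9$ (the paper takes $3^2{:}4$, you take $D_{10}$) and $q=5$ (the paper takes $D_{10}$, you take $\Sym{3}$), all of which are legitimate. The one substantive divergence is $\PSL_3(3)$: the paper takes the Borel subgroup $3^{1+2}{:}2^2$, whereas you take $13{:}3$. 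Your choice does satisfy the literal hypotheses --- an odd-order group is vacuously $2$-nilpotent with trivial $O_2$, it is maximal hence self-normalizing, and Sylow plus Frattini gives the extension to $\Aut{S}$ --- but it defeats the purpose for which the lemma is used: in the proof of Lemma \ref{bound} one must choose a \emph{nontrivial} $2$-element $v_i\in D_i$ to produce the commutator $[t,v_i]$, so the subgroup $D$ is implicitly required to have even order. Your dihedral choices all have this property, but $13{:}3$ does not, so for $\PSL_3(3)$ you should switch to a subgroup of even order (e.g.\ the Borel subgroup, as in the paper). Two minor points: your list of small exceptional cases should also include $q=11$ (where $D_{10}<\Alt{5}<\PSL_2(11)$ is likewise non-maximal), though as you note maximality is never actually needed; and the ``$\Aut{S}$-invariance'' of the class of $T$ is automatic once all cyclic subgroups of $S$ of order $|T|$ are $S$-conjugate, since automorphisms preserve order, so the appeal to the structure of $\mathrm{Out}(S)$ is superfluous.
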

\begin{proof}
We treat the different cases separately.

{\it Case $S\simeq \PSL_2(2^r)$, with $r$ a prime.}

\noindent We may take $D$ to be a dihedral group of oder $2(q+1)$. Then $D$ is 
a maximal subgroup of $S$ and it extends to $\Aut{S}$, being the 
normalizer of maximal torus of order $q+1$ (see \cite[Theorem 6.5.1]{gorenstein}).  

{\it Case $S\simeq \PSL_2(3^r)$, with $r$ a prime.}

\noindent 
When $r$ is odd, $q=3^r\equiv -1 \pmod 4$ and so $\frac{q-1}{2}$ is odd. 
Then take $D$ the normalizer of a split torus of order $\frac{q-1}{2}$. This is  a 
maximal subgroup of $S$ isomorphic to a dihedral group, that extends to $\Aut{S}$.
(see \cite[Theorem 6.5.1]{gorenstein}).
When $r=2$, take $D$ the normalizer of a Sylow $3$-subgroup, then $D\simeq 3^2:4$ is a 
maximal subgroup that extends to $\Aut{S}$ (see \cite{atlas}).

{\it Case $S\simeq \PSL_2(p^{2^a})$, with $p$ an odd prime and $a\geq 0$.}

\noindent 
If $q=p^{2^a}\neq 5, 7, 11$ then take $D$ a dihedral subgroup of order $q+\epsilon$,
where $\epsilon =\pm 1$ and $q\equiv \epsilon \pmod 4$. Then $D$ is maximal in $S$ 
and it extends to $\Aut{S}$ (see \cite[Table 8.1]{colva}). If $q=5$, then 
$S\simeq \Alt{5}$ and one may take as $D$ a dihedral subgroup of order $10$. 
If $q=7$ the group $\PSL_2(7)\simeq \PSL_3(2)$. In this case, take $D$ the stabilizer 
in $\PSL_3(2)$ of a direct decomposition $V=U\oplus W$ of the natural $3$-dimensional 
module $V$ by a $1$-dimensional $U$ and $2$-dimensional $W$. Then $D$ is isomorphic to 
a dihedral group of order $6$, it is self-normalizing in $S$ and it extends to 
$\Aut{S}$. If $q=11$ take $D$ the normalizer in $S$ of Sylow $5$-subgroup, this 
self-normalizing subgroup extends to $\Aut{S}$ (see \cite{atlas}).

{\it Case $S\simeq \PSL_3(3)$.}

\noindent 
Take $D$ to be a Borel subgroup of $S$. This is a self-normalizing subgroup of $S$ 
isomorphic to $P:2^2$, where $P$ is extraspecial of order $27$ and exponent $3$. 

{\it Case $S\simeq \null^2B_2(2^r)$, where $r$ is an odd prime.}

\noindent 
We may take as $D$ a dihedral subgroup of order $2(q-1)$. This is a maximal subgroup 
of $S$ and, since $S$ has a unique conjugacy class of subgroups of order $q-1$ (see 
\cite{suzuki}), it extends to $\Aut{S}$. 
\end{proof}

\section{$n$-rarefied subgroups}
We start by defining some particular members of the class $\Lambda_n$.

\begin{defi}\label{minimal} 
A group $G$ of nonsolvable length $n\geq 1$ will be said to be \emph{$n$-rarefied} if 
the following conditions hold:
\begin{enumerate}
\item $S_i(G)/R_i(G)$ is the unique  minimal normal subgroup of $G/R_i(G)$ for all 
$i=1,2, \dots, n$;
\item the components of $S_i(G)/R_i(G)$ are simple groups in $\mathcal L$ for all 
$i=1,2, \dots, n$;
\item $R_1(G)=\Phi(G)$ and $R_{i+1}(G)/S_i(G)=\Phi(G/S_i(G))$ for all 
$i=1, \dots, n-1$.
\end{enumerate}
\end{defi}
Note that every $n$-rarefied group $G$ is perfect with last $RS$-factor, 
$G/R_{n}(G)$, a simple group.\\

The aim of this section is to show that each group in $\Lambda_n$ contains an 
$n$-rarefied subgroup (Theorem \ref{main}). The existence of  $n$-rarefied 
subgroups can be used to reduce, in certain cases, the study of a question 
about $\Lambda_n$-groups to the case of $n$-rarefied groups. However, if the 
problem we are dealing with, is not about abstract group but concerns groups 
acting on some structure, then this reduction Theorem may not be sufficient. 
In the last part of this section, we derive a reduction argument 
that covers the case of permutation and linear groups (Proposition 
\ref{nearly faithful}).\\

The class of rarefied groups is closed by quotients.
\begin{prop}\label{quotient} 
Let $G$ be an $n$-rarefied group and $N$ a normal subgroup of $G$. 
Then $G/N$ is $m$-rarefied for some $m\leq n$.
\end{prop}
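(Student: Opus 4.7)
The plan is to proceed by induction on $n$, using the degenerate convention that a solvable (in particular trivial) group counts as $0$-rarefied.

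For the base case $n=1$, the quotient $G/\Phi(G)=G/R_1(G)$ is simple and belongs to $\mathcal{L}$. For any $N\trianglelefteq G$, either $N\leq\Phi(G)$, or else $N\Phi(G)=G$ and then $N=G$ by the non-generating property of the Frattini subgroup. In the first case, the standard identity $\Phi(G/N)=\Phi(G)/N$ (valid whenever $N\leq\Phi(G)$) gives $R_1(G/N)=\Phi(G/N)$ and $(G/N)/R_1(G/N)\cong G/\Phi(G)\in\mathcal{L}$, so $G/N$ is again $1$-rarefied.

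For the inductive step, let $G$ be $n$-rarefied and split on whether $N\leq R_1(G)$. In \emph{Case 1} ($N\leq R_1(G)=\Phi(G)$), I would verify by induction on $i$ that $R_i(G/N)=R_i(G)/N$ and $S_i(G/N)=S_i(G)/N$ for every $i=1,\dots,n$; conditions (1) and (2) of Definition \ref{minimal} transfer verbatim from $G$ to $G/N$, and condition (3) is the identity $\Phi(Q/K)=\Phi(Q)/K$ for $K\leq\Phi(Q)$, applied successively to $Q=G$ and $Q=G/S_i(G)$. Hence $G/N$ is $n$-rarefied.

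In \emph{Case 2} ($N\not\leq R_1(G)$), the non-trivial normal subgroup $NR_1(G)/R_1(G)$ of $G/R_1(G)$ must contain the unique minimal normal $S_1(G)/R_1(G)$, so $NR_1(G)\supseteq S_1(G)$. Set $M:=NR_1(G)=NS_1(G)$. A direct check, shifting the $RS$-series of $G$ down by one index, shows that $G/S_1(G)$ is $(n-1)$-rarefied. Applying the inductive hypothesis to the normal subgroup $M/S_1(G)$ of $G/S_1(G)$, we conclude that $G/M$ is $m'$-rarefied for some $m'\leq n-1$. I then lift this rarefied structure from $G/M$ to $G/N$: the kernel $M/N$ is solvable, being a quotient of $R_1(G)$, and, crucially, every maximal subgroup of $G$ containing $N$ must contain $\Phi(G)=R_1(G)$ and therefore $M$. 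Hence the maximal subgroups of $G/N$ correspond bijectively to those of $G/M$, which forces $\Phi(G/N)$ to be the preimage of $\Phi(G/M)=R_1(G/M)$. Iterating identifies $R_i(G/N)$ and $S_i(G/N)$ with the preimages of $R_i(G/M)$ and $S_i(G/M)$, so the three conditions of rarefiedness for $G/N$ reduce to those for $G/M$. Thus $G/N$ is $m'$-rarefied with $m'\leq n-1\leq n$.

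The main obstacle is verifying condition (3) in Case 2: the equality $\Phi(G/N)=R_1(G/N)$ relies crucially on the rarefied identity $R_1(G)=\Phi(G)$, which is what guarantees that $M/N$ lies inside every maximal subgroup of $G/N$. Without this hypothesis one could only conclude that $M/N$ lies in the solvable radical of $G/N$ rather than in its Frattini subgroup, and the pulled-back series would fail condition (3). Everything else amounts to bookkeeping on the $RS$-series of $G/M$ pulled back through $G/N$.
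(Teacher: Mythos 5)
Your argument is correct, but it is organized quite differently from the paper's. The paper inducts on $|G|$ and splits according to whether $\Phi(G)=1$, $N\cap\Phi(G)\neq 1$, or $N\cap\Phi(G)=1$; its hardest case is the last one, where it introduces $B=S_1(N)$, observes $S_1(G)=B\Phi(G)$, and proves that $G/B$ is $(n-1)$-rarefied before reapplying induction. You instead induct on $n$, split on whether $N\leq\Phi(G)$, and in the nontrivial case pass to $M=N\Phi(G)\supseteq S_1(G)$, apply the inductive hypothesis to $M/S_1(G)\trianglelefteq G/S_1(G)$, and then transport the rarefied structure from $G/M$ back to $G/N$. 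The decisive observation is the same Frattini-type correspondence in both proofs --- every maximal subgroup of the relevant quotient contains $\Phi(G)$ and hence the larger normal subgroup, so the Frattini subgroup of the quotient is a full preimage; the paper uses it to identify $\Phi(G/B)$ with $R_2(G)/B$, while you use it to identify $\Phi(G/N)$ with the preimage of $\Phi(G/M)$. Your route avoids the analysis of $S_1(N)$ entirely and is arguably cleaner; the only point to watch is the degenerate sub-case $m'=0$ (i.e.\ $G/M$ solvable), where the identity $\Phi(G/M)=R_1(G/M)$ you invoke does not literally hold --- but there $G/N$ is an extension of the solvable $M/N$ by the solvable $G/M$, hence solvable, and the conclusion is immediate. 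Both your proof and the paper's tacitly use the convention that solvable quotients count as $0$-rarefied; you at least make this explicit.
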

\begin{proof} Clearly we may assume that $N$ is not contained in $\Phi(G)$, otherwise 
the claim is plainly true. We make induction on the order of $G$.  
Assume the claim true for groups of order smaller than $\left|G\right|$ 
and choose $N$ not contained in $\Phi(G)$.
If $\Phi(G)=1$, then $N$ must contain $S_1(G)$, which is the only minimal 
normal subgroup of $G$. Since $G/S_1(G)$ is $(n-1)$-rarefied, we apply induction 
to $\ov{N}=N/S_1(G)$ as a normal subgroup of $\ov{G}=G/S_1(G)$, getting 
that $\ov{G}/\ov{N}\simeq G/N$ is $m$-rarefied for some $m\leq n-1<n$.\\
Therefore we suppose $\Phi(G)\not=1$.  If $A=N\cap \Phi(G)\not=1$, 
we apply the inductive hypothesis to $N/A$ as a normal subgroup of $G/A$, 
and conclude as in the above paragraph. We are then left with the case 
$\Phi(G)\cap N=1$. In this situation $N$ has trivial solvable radical, 
hence $B=S_1(N)$ is semisimple and from this it follows that $S_1(G)=B\Phi(G)$.
If we can prove that $G/B$ is $(n-1)$-rarefied, the claim will follow as before. 
The first remark we should make is that $R_2(G)/B$ is contained in $R/B$, 
the solvable radical of $G/B$, since $S_1(G)/B$ is solvable. 
On the other hand $R^{(d)}\leq B\leq S_1(G)$ for some $d$, 
showing that $R\leq R_2(G)$, from which  $R=R_2(G)$ follows. 
Hence the terms of the $RS$-series of $G/B$ are 
the images of the terms of the $RS$-series of $G$, 
starting with $R_2(G)$. Thus the only thing that remains to be proved, is that 
$R_2(G)/B$ is the Frattini subgroup of $G/B$. Recall that $R_2(G)/S_1(G)$ 
is the Frattini subgroup of $G/S_1(G)$.
Set $F/B$ for the Frattini subgroup of $G/B$ and pick $M/B$ a maximal 
subgroup of $G/B$. Since $M$ is maximal in $G$ it must contain $\Phi(G)$. 
Thus $M$ contains $B\Phi(G)=S_1(G)$ and we have that 
$$
\mathcal{A}=\{M\mid B\leq M \textrm{ and } M/B \textrm{ is maximal in  } G/B\}
$$
coincides
with
$$
\mathcal{B}=\{M\mid S_1(G)\leq M \textrm{ and } M/S_1(G) \textrm{ is maximal in  } G/S_1(G)\}.
$$
Hence 
$$
F=\cap \{M\mid M\in \mathcal A\}=\cap \{M\mid M\in \mathcal B\}=R_2(G)
$$
and the proof is completed. 
\end{proof}

The next Lemma says that for the class of $n$-rarefied 
groups the nonsolvable length behaves well.
\begin{lemma}\label{lambda_min}
Let $G$ be an $n$-rarefied group and $N\nor G$. Then
$\lambda(G)=\lambda(N)+\lambda(G/N).$
\end{lemma}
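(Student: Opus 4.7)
Since Lemma \ref{trivial}(3) already yields $\lambda(G) \leq \lambda(N) + \lambda(G/N)$, the content of the lemma is the reverse bound $\lambda(N) + \lambda(G/N) \leq n$. The plan is to induct on $n = \lambda(G)$, the case $n = 0$ being trivial, and in the inductive step to split according to the position of $N$ in the $RS$-series of $G$. If $N \leq R_1(G) = \Phi(G)$, then $N$ is solvable, so $\lambda(N) = 0$, and $\lambda(G/N) \leq n$ by Lemma \ref{trivial}(3); the bound follows.

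Otherwise $NR_1(G)/R_1(G)$ is a nontrivial normal subgroup of $G/R_1(G)$, and the uniqueness of $S_1(G)/R_1(G)$ as its minimal normal subgroup forces $NR_1(G) \supseteq S_1(G)$. Setting $M := NS_1(G)$, the first step is to reduce to the case $N \supseteq S_1(G)$ by proving $\lambda(N) = \lambda(M)$ and $\lambda(G/N) = \lambda(G/M)$. The modular law gives $S_1(G) = NR_1(G) \cap S_1(G) = R_1(G)(N \cap S_1(G))$, so $M/N \cong S_1(G)/(N \cap S_1(G))$ is a quotient of $R_1(G)$ and hence solvable; both equalities then follow from subadditivity together with the monotonicity $\lambda(K) \leq \lambda(L)$ for $K \trianglelefteq L$, obtained by intersecting any series of $L$ with solvable or semisimple factors with $K$ and noting that a normal subgroup of a semisimple group is a product of some of its simple factors.

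Assuming now $N \supseteq S_1(G)$, Proposition \ref{quotient} tells us that $G/S_1(G)$ is $(n-1)$-rarefied, and applying the inductive hypothesis to the normal subgroup $N/S_1(G)$ yields $\lambda(N/S_1(G)) + \lambda(G/N) = n-1$. It therefore suffices to prove $\lambda(N) = 1 + \lambda(N/S_1(G))$. The inequality $\leq$ is again subadditivity combined with $\lambda(S_1(G)) = 1$; for the reverse, the key claim is $S_1(N) = S_1(G)$. To establish this one first notes $R_1(N) = R_1(G)$: since $R_1(G) \leq S_1(G) \leq N$ is solvable and normal in $N$ we have $R_1(G) \leq R_1(N)$, while $R_1(N)$ is characteristic in $N$ and hence a solvable normal subgroup of $G$, giving $R_1(N) \leq R_1(G)$. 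Consequently $S_1(N)/R_1(G)$ is the socle of $N/R_1(G) \leq G/R_1(G)$; any minimal normal subgroup $A$ of $N/R_1(G)$ meeting $S_1(G)/R_1(G)$ trivially would centralise it and hence lie in $C_{G/R_1(G)}(S_1(G)/R_1(G))$, which is trivial by Lemma \ref{trivial}(4). So every such $A$ is contained in $S_1(G)/R_1(G)$, whence $S_1(N) \subseteq S_1(G)$; the reverse inclusion is automatic, equality holds, and $\lambda(N) = 1 + \lambda(N/S_1(N)) = 1 + \lambda(N/S_1(G))$.

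The main obstacle is this final identification $S_1(N) = S_1(G)$: it is the point at which the rarefied hypothesis on $G$ is really used, through the uniqueness of $S_1(G)/R_1(G)$ as the minimal normal subgroup of $G/R_1(G)$ combined with the centraliser statement of Lemma \ref{trivial}(4). The rest of the argument is bookkeeping with subadditivity together with the absorption of the solvable layer $M/N$ that reduces the general case to $N \supseteq S_1(G)$.
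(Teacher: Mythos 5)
Your proof is correct and follows essentially the same route as the paper's: reduce to the case $S_1(G)\leq N$, establish $S_1(N)=S_1(G)$, and induct through $G/S_1(G)$ using $\lambda(N)=1+\lambda(N/S_1(G))$. The only differences are cosmetic --- the paper absorbs $\Phi(G)$ into $N$ and asserts $S_1(N)=S_1(G)$ without comment, whereas you absorb $S_1(G)$ into $N$ via $M=NS_1(G)$ and spell out the key identity using Lemma \ref{trivial}(4).
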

\begin{proof} Since $\lambda(N)=\lambda(N\Phi(G))$, we assume $\Phi(G)\leq N$.
When $\Phi(G)\not=1$, an obvious inductive argument gives the claim, 
since $\lambda(G)=\lambda(G/\Phi(G))$ and $G/\Phi(G)$ is still $n$-rarefied.
Thus we assume $\Phi(G)=1$ which implies $S_1(G)\leq N$, unless $N=1$, 
a case that we do not need to consider.
Since $S_1(G)=S_1(N)$, we apply induction on $N/S_1(G)$ as a subgroup 
of $G/S_1(G)$, getting
$$
\lambda(G/S_1(G))=\lambda ( N/S_1(G))+\lambda(G/N)
$$
and the claim follows because $\lambda(G/S_1(G))=\lambda(G)-1$ and 
$\lambda ( N/S_1(G))=\lambda(N)-1$.
\end{proof}

Our first step towards Theorem \ref{main} is to show that every 
group in $\Lambda_n$ has a subgroup whose $RS$-series satisfies some restrictions.

\begin{lemma}\label{mono} Let $G$ be a group in $\Lambda_n$. 
Then there exists a subgroup $H$ of $G$ such that $H\in \Lambda_n$ and 
$S_i(H)/R_i(H)$ is the unique minimal normal subgroup of $H/R_i(H)$, for all 
$i=1, \dots, n$.
\end{lemma}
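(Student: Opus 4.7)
My plan is to argue by induction on $|G|$. If $G$ itself is monogenic at every level---meaning $S_i(G)/R_i(G)$ is the unique minimal normal subgroup of $G/R_i(G)$ for each $i=1,\dots,n$---then $H:=G$ works. Otherwise it suffices to produce a proper subgroup $H'\lneq G$ with $\lambda(H')=n$; the inductive hypothesis applied to $H'$ then yields the required $H\leq H'\leq G$. Let $i$ be the \emph{largest} index at which the monogenicity condition fails in $G$, and decompose $S_i(G)/R_i(G)=\ov{N}_1\oplus \ov{N}_2$, where $\ov{N}_1$ is one minimal normal subgroup of $G/R_i(G)$ and $\ov{N}_2$ the direct product of the remaining ones.

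When $i=n$, the socle $S_n(G)/R_n(G)=T_1\times\cdots\times T_r$ has at least two simple factors. Fix one such simple component $T_1$ and set $H':=\pi_n^{-1}(T_1)$ with $\pi_n\colon G\to G/R_n(G)$. Since $T_1$ is subnormal in $G/R_n(G)$, $H'$ is subnormal in $G$, so Lemma~\ref{trivial}(5) gives $R_j(H')\leq R_j(G)$ and $S_j(H')\leq S_j(G)$. Because $R_n(G)\subseteq H'$, the reverse inclusions follow easily, yielding $R_j(H')=R_j(G)$, $S_j(H')=S_j(G)$ for $j<n$, $R_n(H')=R_n(G)$ and $S_n(H')=H'$. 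Thus $\lambda(H')=n$ and $r\geq 2$ forces $H'\lneq G$, so the induction applies.

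The delicate case is $i<n$, where monogenicity already holds at levels $i+1,\dots,n$. My candidate is $H':=C_G(\ov{N}_2)$, the preimage in $G$ of the centraliser of $\ov{N}_2$ inside $G/R_i(G)$. This is a proper subgroup since the nonabelian $\ov{N}_2$ cannot centralise itself, and by construction the ``spurious'' orbit $\ov{N}_2$ disappears from the level-$i$ socle of $H'$. The decisive point is the verification that $\lambda(H')=n$, i.e.\ that none of the nonabelian simple layers of $G$ above level $i$ is lost. I would argue this by combining Lemma~\ref{trivial}(4) (the centraliser of the whole socle is trivial), the Schreier conjecture (the outer automorphism group of every nonabelian finite simple group is solvable), and the monogenicity at levels above~$i$: any simple subquotient of $G/R_i(G)$ above level~$i$ can only act on~$\ov{N}_2$ by permuting its simple factors, and the monogenicity of the higher layers is exactly what forces this permutation action to be trivial, placing every such simple layer inside $C_G(\ov{N}_2)=H'$. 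This verification is the main obstacle of the whole argument; once it is in place the induction closes, and iterating the reduction produces the required~$H$.
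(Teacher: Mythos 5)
Your reduction in the case $i=n$ is essentially sound (it mirrors the paper's base case $n=1$), but the case $i<n$, which you yourself flag as ``the decisive point,'' contains a genuine gap: the claim that monogenicity at the levels above $i$ forces the higher nonabelian layers to centralise $\ov{N}_2$ is false. Monogenicity above level $i$ is perfectly compatible with a higher layer permuting the simple components of $\ov{N}_2$ nontrivially. Concretely, take $G=(\Alt{5}\times\Alt{6})\wr\Alt{5}$ (with $i=1$, $n=2$): here $S_1(G)=\Alt{5}^5\times\Alt{6}^5$ has exactly two minimal normal subgroups, the top $\Alt{5}$ is the unique minimal normal subgroup of $G/R_2(G)$, and yet it permutes the five components of \emph{each} of the two minimal normal subgroups nontrivially. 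Consequently $C_G(\ov{N}_2)\in\Lambda_1$ for \emph{either} choice of $\ov{N}_2$, so no choice of which minimal normal subgroup to retain as $\ov{N}_1$ rescues your candidate $H'=C_G(\ov{N}_2)$. (The Schreier conjecture only disposes of the outer-automorphism part of the action on each component; it says nothing about the permutation part, which is where the nonsolvable length can hide.)

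The paper gets around exactly this obstacle in two steps that your sketch is missing. First, instead of asking that the higher layers centralise the discarded part, it shows that for a suitable minimal normal subgroup $T_1$ one has $C_G(T_1)\leq R_2(G)$ — this is the commutator computation $\gamma_r(S_2(G))\leq R_2(G)\bigl(\bigcap_i C_G(T_i)\bigr)=R_2(G)$, which would force $S_2(G)=R_2(G)$ if every $C_G(T_i)$ escaped $R_2(G)$ — and deduces that the \emph{quotient} $\overline{G}=G/T$, with $T=\prod_{j\neq 1}T_j$, still lies in $\Lambda_n$. Second, since $\overline{G}$ is a quotient rather than a subgroup, it is realised inside $G$ not as a centraliser but as a \emph{minimal supplement} $H$ to $T$: a Frattini argument on the Sylow subgroups of $H\cap T$ shows $H\cap T$ is nilpotent, whence $H/R_1(H)\simeq\overline{G}/R_1(\overline{G})$ and $H\in\Lambda_n$. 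In the example above this produces $H\simeq\Alt{5}\wr\Alt{5}$ (a complement to $\Alt{6}^5$), which your centraliser construction cannot reach. Without these two ingredients the induction does not close, so the proposal as written does not prove the lemma.
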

\begin{proof} We prove the claim by induction on $n$. 
If $n=1$ the section $S_1(G)/R_1(G)$ is a direct product of simple groups. 
Choose $H$ such that $R_1(G)\leq H$ and $H/R_1(G)$ is one of the simple direct 
factors of $S_1(G)/R_1(G)$. Thus $R_1(H)=R_1(G)$ and the claim holds.

Assume $n>1$ and the claim true for groups in $\Lambda_{n-1}$. If the claim does 
not hold in $\Lambda_n$, choose a counterexample $G\in \Lambda_n$ of 
minimal order. If $R_1(G)\not=1$, the group $G/R_1(G)$ is still in $\Lambda_n$ 
(by Lemma \ref{trivial}) but its order is strictly smaller than $\vert G\vert$. 
There is therefore a subgroup $H$ of $G$ such that $H/R_1(G)$ belongs to 
$\Lambda_n$ and has the required property. Clearly $R_1(G)\leq R_1(H)$, so that 
the preimages of the terms of the $RS$-series of $H/R_1(G)$ are the terms of the 
$RS$-series of $H$, contradicting the fact that $G$ was a counterexample. Hence 
$R_1(G)=1$.

The group $G/S_1(G)$ belongs to $\Lambda_{n-1}$, so that there exists 
$\overline{K}=K/S_1(G)\leq G/S_1(G)$ such that $\overline{K}\in \Lambda_{n-1}$ 
and $S_i(\overline{K})/R_i(\overline{K})$ is the unique minimal normal subgroup of 
$\overline{K}/R_i(\overline{K})$ for all $i=1, \dots , n-1$. Since 
$[R_1(K),S_1(G)]\leq R_1(K)\cap S_1(G)=1$, the subgroup $R_1(K)$ lies in $C_G(S_1(G))$ 
which is trivial by Lemma \ref{trivial}(4); therefore the $RS$-series of $K$ starts 
with $S_1(K)$. Of course $S_1(G)\leq S_1(K)$ and,  being $S_1(K)$ semisimple, if 
$S_1(G)\not = S_1(K)$ we have a direct decomposition $S_1(K)=S_1(G)\times C$ with 
$C\not=1$. But then  $C$ is contained in $C_G(S_1(G))=1$, a contradiction. As a 
consequence we have that $K$ belongs to $\Lambda_n$ and, by minimality of $G$,  
$G=K$.

It is then possible to decompose $S_1(G)$ as $S_1(G)=T_1\times \cdots T_r$, where each 
$T_i$ is a minimal normal subgroup of $G$. Since $G$ is a counterexample, $r$ is at 
least $2$. For every $i=1,2,\ldots, r$, set $C_i=C_G(T_i)$. If none of the 
$C_i$ is contained in $R_2(G)$, for $i=1, \ldots , r$, then each subgroup $C_iR_2(G)$ 
must contain $S_2(G)$, because $S_2(G)/R_2(G)$ is the only minimal normal subgroup of 
$G/R_2(G)$. Therefore, writing $S$ for $S_2(G)$, we have  
$$S=S\cap C_iR_2(G)=(S\cap C_i)R_2(G),$$
for all $i=1,2,\ldots,r$.
Thus
$$S'=[S,S]=[(S\cap C_1)R_2(G),(S\cap C_2)R_2(G)]\leq R_2(G)(C_1\cap C_2).$$
In this way we prove that
$$ \gamma_r(S)\leq  R_2(G)(\cap_{i=1}^rC_i). $$
However $\bigcap_{i=1}^rC_i=1$, showing that $S_2(G)/R_2(G)$ is nilpotent of class 
at most $r-1$. This can happen only if $S_2(G)/R_2(G)=1$ which, in turn, implies 
$n=1$, a contradiction. We can therefore assume, without loss of generality, that 
$C=C_G(T_1)\leq R_2(G)$. If $T=\prod_{i\not=1}T_i$, set $\overline{G}=G/T$.
We claim that $\overline{G}\in \Lambda_n$.

First of all we prove that $R_1(\overline{G})=C/T$. Write $R_1(\overline{G})=U/T$.
Being $T_1$ a minimal normal subgroup of $G$ and $U/T$  solvable, we have 
$T_1\cap U=1$. Therefore $[T_1,U]=1$, thus $U\leq C$. 
On the other hand, we know that $C\leq 
R_2(G) $, hence $C S_1(G)/S_1(G)$ is solvable and, for some $d$, ${C}^{(d)}\leq S_1(G)
\cap C=T$. This shows that $C/T$ is solvable or, in other words, that $C\leq U$ thus 
proving that $R_1(\overline{G})=C/T$. In order to prove our claim, we shall show that 
$S_1(\overline{G})=S_1(G)C/T$. What we need is to identify the socle of 
$\overline{G}/R_1(\overline{G})$ and, since this group is isomorphic to $G/C$, 
we prefer to work in this quotient of $G$. Of course $S_1(G)C/C$ is contained in the 
socle of $G/C$. The subgroup $S_1(G)C/C$ is normal so, in particular, it is normal in 
$S_1(G/C)$, which is semisimple as $G/C\simeq \overline{G}/R_1(\overline{G})$. 
This means that we can write $S_1(G/C)= S_1(G)C/C\times L/C$, where $L/C$ is normal 
and, if non  trivial, it is the direct product of nonabelian simple groups. Taking 
commutators we get $[T_1,L]\leq T_1\cap C=1$ thus showing that $L\leq C$. Hence 
$S_1(G/C)=S_1(G)C/C$ and from this it follows that
$$
\frac{\overline{G}}{S_1(\overline{G})}\simeq 
\frac{\overline{G}/R_1(\overline{G})}{S_1(\overline{G})/R_1(\overline{G})}\simeq
\frac{G/C}{S(G/C)}\simeq
\frac{G/C}{S_1(G)C/C}\simeq 
\frac{G}{S_1(G)C}.
$$
Now, the subgroup $S_1(G)C$ is contained in $R_2(G)$ so that $G/S_1(G)C$ has an image 
isomorphic to $G/R_2(G)\in \Lambda_{n-1}$.
This proves that $\overline{G}$ belongs to $\Lambda_n$.

Let $H$ be a supplement to $T$ in $G$ of minimal order. Of course, $H$ is a proper 
subgroup of $G$ (by an easy application of Frattini argument for instance). 
The subgroup $D=H\cap T$ is then nilpotent. For, if $Q$ is a nontrivial Sylow 
$p$-subgroup of $D$, the Frattini argument gives $H=D N_H(Q)$, so that $N_H(Q)$ 
is a supplement to $T$ in $G$. By the minimal choice of $H$,
we have $N_H(Q)=H$ and, since all the Sylow subgroups of $D$ are normal, $D$ 
is nilpotent. Clearly $D\leq R_1(H)$ and
$$ \frac{H}{D}=\frac{H}{H\cap T} \simeq \frac{HT}{T}=\frac{G}{T}=\overline{G}. $$
Therefore we have
$$\frac{H}{R_1(H)}\simeq \frac{H/D}{R_1(H)/D}\simeq  
\frac{\overline{G}}{R_1(\overline{G})}, $$
showing that $H$ is in $\Lambda_n$ and has the desidered property. 
This contradiction completes the proof of the Lemma. 
\end{proof}

\bigskip

In the course of our analysis we have to treat the following situation. 
A finite group $G$ has a unique minimal normal subgroup $T$ which is 
nonabelian, and so a direct product of simple groups $S_i$, for 
$i =1, \ldots, r$ all isomorphic say to $S$. Since $G$ acts transitively 
on the set $\{S_i\vert i=1,\ldots, r\}$, 
for each $i=1, \ldots, r$ we fix $g_i\in G$ in such a way that $S_i = S_1^{g_i}$,  
and choose $g_1 = 1$. We also fix an isomorphism $\alpha$ 
from $S$ to $S_1$ and, given any subgroup $H$ of $S$, we let
$$H^* = H_1 \times H_2 \times \ldots \times H_r$$
the subgroup of $T$ such that for every $i = 1,\ldots, r$, $H_i = (H^{\alpha})^{g_i}$. 
In particular, $T=S^*$.

\begin{lemma}\label{extend}
Let $G$ be a finite group and, with the above notation, suppose 
that $S^*$ is the unique minimal normal subgroup of $G$ which is nonabelian. 
If $H$ is a proper subgroup of $S$ that extends from $S$ to $\Aut{S}$, then $H^*$
extends from $S^*$ to $G$.
\end{lemma}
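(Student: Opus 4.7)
The plan is to take an arbitrary $g\in G$ and exhibit an element $t\in T=S^*$ with $(H^*)^g=(H^*)^t$. Once this is done, $gt^{-1}\in N_G(H^*)$, so $g\in N_G(H^*)\cdot T=T\cdot N_G(H^*)$, which is the assertion that $H^*$ extends from $T$ to $G$.

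First I would set up coordinates. Since $T$ is a minimal normal nonabelian subgroup, conjugation by $g$ permutes the factors $S_1,\ldots,S_r$; let $\sigma=\sigma_g$ be the permutation of $\{1,\ldots,r\}$ given by $S_i^g=S_{\sigma(i)}$. For each $i$ write $\iota_i\colon S\to S_i$ for the isomorphism $s\mapsto(s^\alpha)^{g_i}$, so that $H_i=H^{\iota_i}$ by the very definition of $H^*$. Writing $c_g$ for the restriction of conjugation by $g$ from $S_i$ to $S_{\sigma(i)}$, the composition
$$\theta_i\ :=\ \iota_i\cdot c_g\cdot\iota_{\sigma(i)}^{-1}\ \in\ \Aut{S}$$
is an automorphism of $S$, and transporting $H_i^g\subseteq S_{\sigma(i)}$ back to $S$ through $\iota_{\sigma(i)}^{-1}$ produces exactly $H^{\theta_i}$.

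Now I would invoke the hypothesis: $H$ extends from $S$ to $\Aut{S}$, i.e.\ $[H]_{\Aut{S}}=[H]_S$. Hence for each $i$ there is $s_i\in S$ with $H^{\theta_i}=H^{s_i}$. Setting $t_{\sigma(i)}:=s_i^{\iota_{\sigma(i)}}\in S_{\sigma(i)}$ and pushing the equality forward through $\iota_{\sigma(i)}$ gives
$$H_i^g\ =\ H_{\sigma(i)}^{\,t_{\sigma(i)}}.$$
Since $\sigma$ is a permutation of $\{1,\ldots,r\}$, this assignment defines a full tuple $t=(t_1,\ldots,t_r)\in S_1\times\cdots\times S_r=T$. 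Because distinct components of $T$ centralise each other, conjugation by $t$ acts coordinate-wise, so
$$(H^*)^g\ =\ \prod_{i=1}^r H_i^g\ =\ \prod_{i=1}^r H_{\sigma(i)}^{\,t_{\sigma(i)}}\ =\ \prod_{j=1}^r H_j^{\,t_j}\ =\ (H^*)^t,$$
which is exactly what is needed.

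The argument is largely bookkeeping. The single nontrivial point is the step $H^{\theta_i}=H^{s_i}$, which is precisely the content of the extension hypothesis; the main pitfall to watch for is to maintain consistent conjugation conventions under the three identifications $S\cong S_i$, $c_g\colon S_i\to S_{\sigma(i)}$, and $S_{\sigma(i)}\cong S$, so that the transported element $s_i^{\iota_{\sigma(i)}}$ genuinely lies in $S_{\sigma(i)}$ and realises the required conjugation on $H_{\sigma(i)}$. The hypothesis that $S^*$ is the unique minimal normal nonabelian subgroup of $G$ is used only to guarantee that the $S_i$ form a single $G$-orbit, so that $\sigma_g$ is well defined and surjective.
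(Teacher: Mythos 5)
Your proof is correct and follows essentially the same route as the paper's: for each component you compare $H_i^g$ with $H_{\sigma(i)}$ inside $S_{\sigma(i)}$, use the hypothesis $[H]_{\Aut{S}}=[H]_S$ to produce a conjugating element in that component, and assemble these into a single $t\in S^*$ with $(H^*)^g=(H^*)^t$. The only difference is notational (you transport back to the abstract $S$ via the isomorphisms $\iota_i$, while the paper works in the reference copy $S_1$ via the elements $g_i$), so there is nothing to add.
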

\begin{proof} 
For every $x\in G$ denote by $\sigma_x \in \Sym{r}$ the permutation induced by $x$ 
on the set $\{S_i\vert i\leq r \}$, so that, in our notation, for every 
$i=1,\ldots, r$:
$$  S_1^{g_ix}=S_i^x=S_{\sigma_x(i)}=S_1^{g_{\sigma_x(i)}}.  $$
Now for every $i$, the component $S_i^x$ contains both the subgroups 
$H_i^x$ and $H_{\sigma_x(i)}$, and we claim that these subgroups
are $S_i^x$-conjugate. 
Note that $g_ix(g_{\sigma_x(i)})^{-1}\in N_G(S_1)$ and since $H_1$ 
extends from $S_1$ to 
$\Aut{S_1}$, there exists an element $s_i\in S_1$ such that
$$H_1^{g_ix(g_{\sigma_x(i)})^{-1}}=H_1^{s_i},$$
equivalently:
$$H_i^x=H_1^{s_ig_{\sigma_x(i)}}=
\left(H_{\sigma_x(i)}\right)^{(g_{\sigma_x(i)})^{-1}s_ig_{\sigma_x(i)}},$$
which proves our claim since 
$(g_{\sigma_x(i)})^{-1}s_ig_{\sigma_x(i)}\in S_1^{g_{\sigma_x(i)}}=S_i^x$. 
If we set
$$t=\prod_{i=1}^r \left((g_{\sigma_x(i)})^{-1}s_ig_{\sigma_x(i)}\right)$$
then $t\in S^*$ and we have that 
$$(H^*)^x=(H^*)^{t},$$
therefore $x\in S^*N_G(H^*)$, which proves the Lemma.
\end{proof}

\begin{lemma}\label{normalizer} Let $G$ be a group in $\Lambda_n$.  
Then $G$ contains a subgroup $M$ belonging to $\Lambda_n$, such 
that the components of $S_1(M)/R_1(M)$ are in $\mathcal{L}$.
\end{lemma}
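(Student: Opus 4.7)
The plan is to induct on $|G|$, using Lemma \ref{mono} to reduce to the unique-minimal-normal-subgroup case at every level, and then applying the extension machinery of Lemmas \ref{simple1} and \ref{extend} to pass to a proper subgroup of $G$ that still has nonsolvable length $n$.

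By Lemma \ref{mono}, I may replace $G$ by a subgroup (also called $G$) for which $S_i(G)/R_i(G)$ is the unique minimal normal subgroup of $G/R_i(G)$ for every $i$. In particular $S_1(G)/R_1(G)\cong S^*:=S_1\times\cdots\times S_r$, with each $S_i$ isomorphic to a single nonabelian simple group $S$. If $S\in\mathcal{L}$, setting $M=G$ already yields the conclusion. Otherwise, by Lemma \ref{simple1} there is a proper self-normalizing subgroup $H<S$ belonging to $\Lambda_1$ and extending from $S$ to $\Aut{S}$. Lemma \ref{extend} then gives that $H^*=H_1\times\cdots\times H_r$ extends from $S^*$ to $\bar G:=G/R_1(G)$. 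Writing $\bar M:=N_{\bar G}(H^*)$, we have $\bar G=S^*\bar M$ and, since $H$ is self-normalizing in $S$, also $\bar M\cap S^*=H^*$. Thus $\bar M/H^*\cong\bar G/S^*=\bar G/S_1(\bar G)$ has nonsolvable length $n-1$. Letting $M'$ denote the full preimage of $\bar M$ in $G$, the strict inclusion $H<S$ ensures $|M'|<|G|$.

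The main obstacle is to show $\lambda(M')=n$. The upper bound $\lambda(M')\leq n$ follows from Lemma \ref{trivial}(3) applied to the chain $1\leq R_1(G)\leq H^*R_1(G)\leq M'$, whose successive quotients are solvable, a $\Lambda_1$-group, and a $\Lambda_{n-1}$-group. The lower bound is where the unique-MNS property from Lemma \ref{mono} is essential: any candidate component of $\bar M/R_1(\bar M)$ not already arising from the image of $H^*$ would commute with that image, and tracking back through $\bar M\cap S^*=H^*$ together with the triviality of $C_{\bar G}(S^*)$ (Lemma \ref{trivial}(4)) forces any such alleged component into $R_1(\bar M)$. Hence $S_1(\bar M)=H^*R_1(\bar M)$, and so $\bar M/S_1(\bar M)$ is the quotient of $\bar M/H^*\cong\bar G/S^*$ by a solvable normal subgroup, still of nonsolvable length $n-1$. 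This gives $\lambda(\bar M)=\lambda(M')=n$.

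Finally, by the induction hypothesis applied to $M'$, which has strictly smaller order and still belongs to $\Lambda_n$, there is a subgroup $M\leq M'$ with $M\in\Lambda_n$ and all components of $S_1(M)/R_1(M)$ in $\mathcal{L}$. This $M$ is then the subgroup of $G$ required by the lemma.
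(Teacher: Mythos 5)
Your strategy is the same as the paper's: reduce to the unique-minimal-normal-subgroup situation via Lemma \ref{mono}, choose $H<S$ by Lemma \ref{simple1}, extend $H^*$ to the whole group by Lemma \ref{extend}, show that the normalizer of $H^*$ still has nonsolvable length $n$, and conclude by induction on the order. The upper bound $\lambda(M')\le n$ and the isomorphism $\bar M/H^*\cong\bar G/S^*$ are correctly handled.

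The gap is in the lower bound, which is the technical heart of the proof. First, the identity $S_1(\bar M)=H^*R_1(\bar M)$ cannot hold as stated: a subgroup $H\in\Lambda_1$ produced by Lemma \ref{simple1} may have a nontrivial solvable top $H/S_1(H)$ (e.g.\ $H\cong\PSL_2(q_0).2$ in case i of that lemma), so $H^*R_1(\bar M)/R_1(\bar M)$ need not be semisimple and $H^*\not\le S_1(\bar M)$; the correct statement is $S_1(\bar M)=S_1(H^*)R_1(\bar M)$, and then $\bar M/S_1(\bar M)$ is \emph{not} literally a quotient of $\bar M/H^*$ --- one must pass to $R_2(\bar M)$, which does contain $H^*$, or quotient further by the solvable image of $H^*$. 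More seriously, the claim that an ``alleged extra component'' is forced into $R_1(\bar M)$ does not follow from $\bar M\cap S^*=H^*$ and $C_{\bar G}(S^*)=1$ alone. The actual argument (steps 2--3 of the paper's proof) runs: such a component $D$ centralizes $S_1(H^*)/R_1(H^*)$, hence must normalize each $S_i$ (otherwise it would permute the direct factors of $S_1(H^*)R_1(\bar M)/R_1(\bar M)$ nontrivially); then $D/(D\cap S^*)$ embeds into $\prod_i\Out{S_i}$, which is solvable by the Schreier conjecture, while $D\cap S^*\le N_{S^*}(H^*)=H^*$ together with $C_{H^*/R_1(H^*)}(S_1(H^*)/R_1(H^*))=1$ --- Lemma \ref{trivial}(4) applied to $H^*$, not only to $\bar G$ --- forces $D\cap S^*\le R_1(H^*)$; combining these makes $D$ solvable, the desired contradiction. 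The solvability of the outer automorphism groups is indispensable here and is absent from your sketch, so as written the inequality $\lambda(M')\ge n$ is not established.
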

\begin{proof}
Let $G$ be a minimal counterexample, which, by Lemma \ref{mono}, we assume 
having $S_i(G)/R_i(G)$ as unique minimal normal subgroup of $G/R_i(G)$, for all 
$i=1, \dots, n$. In particular, write
$$S_1(G)=S^*=S_1\times S_2\times\ldots\times S_r,$$ 
where each $S_i\simeq S$ is simple and $r\geq 1$.\\
Note that $S\not \in\mathcal{L}$, otherwise we may take $M=G$ and $G$ is no
more a counterexample. \\ 
By Lemma \ref{simple1}, we choose a proper self-normalizing subgroup $H$ of $S$ 
which belongs to $\Lambda_1$ and such that it extends from $S$ to $\Aut{S}$. 
By Lemma \ref{extend}, the $\Lambda_1$-subgroup $H^*$ of $S^*$ may be 
extended to an $G$, i.e. $G=S_1(G)M$, where $M=N_G(H^*)$ is a proper subgroup 
of $G$. We show that $M\in\Lambda_n$, then the contradiction 
will follow by the minimal choice of $G$.\\
%
%
%
\noindent 
We set $A=R_1(H^*)=(R_1(H))^*$, $B=S_1(H^*)=(S_1(H))^*$ and $C=C_M(B/A)$ and we 
proceed by steps.\\
{\it 1)} $A= H^*\cap R_1(M)$ and $B=H^*\cap S_1(M)$.\\
Since $H^*\triangleleft M$, by Lemma \ref{trivial} we have that 
$A \leq  H^*\cap R_1(M)$ and $B\leq H^*\cap S_1(M)$. Conversely, both 
$H^*\cap R_1(M)$ and $H^*\cap S_1(M)$ are normal in $H^*$, thus by 
Lemma \ref{trivial} again, we obtain 
$$H^*\cap R_1(M)= R_1(H^*\cap R_1(M))\leq R_1(H^*)=A$$
and 
$$H^*\cap S_1(M)= S_1(H^*\cap S_1(M))\leq S_1(H^*)=B.$$
{\it 2)} $C$ is solvable.\\
Note that $C$ normalizes every component $S_i$ of $G$.
This is trivial when $r=1$, while if $r>1$ let $x\in C$ and assume that $x$ 
moves $S_i$ to $S_j$ (with $j\neq i$), then $x$  
moves $H_i A/A$ onto $H_j A/A$ and then $B/A$ is no more centralized by $x$, 
which is a contradiction. Hence every element of $C=C_M(B/A)$ induces an 
automorphism of each $S_i$ and in particular the group 
$CS^*/S^*\simeq C/C\cap S^*$ is solvable (being a subdirect product of 
outer automorphism group of a simple group).
Also, $C\cap S^*\leq N_{S^*}(H^*)=H^*$, thus $(C\cap S^*)A/A\leq C_{H^*/A}(B/A)$,
which is trivial by Lemma \ref{trivial}(4). 
Thus $C\cap S^*\leq A$ and $C$ is solvable. \\
{\it 3)} $BR_1(M)=S_1(M)$.\\
As $H^*\triangleleft M$, by Lemma \ref{trivial} we have that $BR_1(M) \leq S_1(M)$.
Assume by contradiction that the inclusion is proper and choose a direct 
complement $D/R_1(M)$ of $BR_1(M)/R_1(M)$ in $S_1(M)/R_1(M)$. Then 
$$[B,D]\leq R_1(M)\cap B\leq R_1(M)\cap H^*=A,$$
by step {\it 1)}. This means that $D\leq C$ and so by step {\it 2)}, 
$D$ is solvable, which is a contradiction.  \\
{\it 4)}  $M\in \Lambda_n$.\\ 
Note first that being $H$ self-normalizing in $S$, the subgroup $H^*$ is 
self-normalizing in $S^*$. Hence
 $$\frac{M}{H^*}= \frac{N_G(H^*)}{N_{S^*}(H^*)}=
 \frac{N_G(H^*)}{S^*\cap N_{G}(H^*)}
 \simeq \frac{G}{S^*}$$
and therefore it belongs to $\Lambda_{n-1}$. Moreover,
as $H^*=R_2(H^*)$ and $H^*\nor M$, we have $H^*\leq R_2(M)$. Also
by step {\it 3)} that $H^*S_1(M)=H^*R_1(M)\leq R_2(M)$ and hence 
$H^*S_1(M)/H^*$, being solvable and normal in $M/H^*$, lies in the 
solvable radical of $M/H^*$, which is $R_2(M)/H^*$. This with $
M/H^*\in\Lambda_{n-1}$ implies that $M/R_2(M)\in \Lambda_{n-1}$ and 
therefore $M\in\Lambda_{n}$. 
\end{proof}

We are now in a position to prove our main result. 

\noindent
{\sl{Proof of Theorem \ref{main}}.} Let $G\in\Lambda_n$ be a counterexample such that $\vert G\vert+n$ is minimal.
By Lemma \ref{mono} we may assume that $S_i(G)/R_i(G)$ is the unique minimal normal 
subgroup of $G/R_i(G)$, for every $i=1,2,\ldots, n$. 

Assume that $R_1(G)\not=1$. Then by the minimal choice, the group 
$\overline{G}=G/R_1(G)$ contains $n$-rarefied subgroup 
$\overline{K}=K/R_1(G)$. Note that being $R_1(G)\leq R_1(K)$ we have that 
$R_i(G)\leq R_i(K)$ and $S_i(G)\leq S_i(K)$ for each $i=1,\ldots, n$ and therefore 
$$\frac{R_{i+1}(\overline{K})}{S_i(\overline{K})}\simeq \frac{R_{i+1}(K)}{S_i(K)}\quad
\textrm{ and }\quad
\frac{S_{i}(\overline{K})}{R_i(\overline{K})}\simeq \frac{S_{i}(K)}{R_i(K)}.
$$
In particular, if $R_1(K)\leq \Phi(K)$, then $K$ is an $n$-rarefied 
subgroup of $G$, which is a contradiction. Hence we have that 
$R_1(K)\not\leq \Phi(K)$. Let $M$ be a maximal subgroup of $K$ such that 
$K=R_1(K)M$. Since 
$$\frac{R_1(K)}{R_1(G)}=R_1(\overline{K})=\Phi(\overline{K}),$$
we have that $R_1(G)\not\leq M$ and therefore $K=R_1(G)M$. 
Call $H$ a minimal supplement of $R_1(G)$ in $K$. 
Note that $R_1(H)\leq \Phi(H)$, otherwise taken $L$ a maximal subgroup 
of $H$ that supplements $R_1(H)$ in $H$, since 
$R_1(H)/H\cap R_1(G)=\Phi(H/H\cap R_1(G))$, 
$L$ does not contain $H\cap R_1(G)$, forcing $H=(H\cap R_1(G))L$ and therefore 
$K=R_1(G)L$, which contradicts the minimal choice of $H$. 
Therefore $R_1(H)\leq \Phi(H)$ and, since $H\cap R_1(G)\leq R_1(H)$ 
and $H/H\cap R_1(G)\simeq \overline{K}$, it easily 
follows that $H$ is an $n$-rarefied subgroup of $G$, which is a 
contradiction.

Therefore $R_1(G)=1$. If $n=1$, our minimal choice implies that $S_1(G)=G$ is a simple 
group not belonging to $\mathcal{L}$. Take $H$ a proper subgroup $G$ satisfying the 
properties of Lemma \ref{simple1}. As $H\in\Lambda_1$ and 
$\vert H\vert<\vert G\vert$, then $H$ (and therefore $G$ too) contains an 
$1$-rarefied subgroup, which is a contradiction. Therefore $n>1$. 
By inductive assumption, $G/S_1(G)$ contains a $(n-1)$-rarefied subgroup  
$H/S_1(G)$. Of course, $S_1(G)\leq S_1(H)$. If $S_1(G)<S_1(H)$ let $B/R_1(H)$ 
be a complement of $S_1(G)R_1(H)/R_1(H)$ in $S_1(H)/R_1(H)$. Then 
$$[S_1(G),B]\leq S_1(G)\cap R_1(H)\leq R_1(G)=1,$$
which implies that $C_G(S_1(G))\ne 1$ and this contradicts Lemma \ref{trivial} (4.). 
It follows that $S_1(G)=S_1(H)$ and, since $H/S_1(H)\in \Lambda_{n-1}$, 
$H$ belongs to $\Lambda_{n}$. Again the minimality of $G$ implies that $G=H$. 
Remember that $S_1(G)$ is the unique minimal normal subgroup of $G$, therefore if 
its components lie in $\mathcal{L}$, then $G$ is $n$-rarefied, which is \
a contradiction. We may therefore apply Lemma \ref{normalizer} to find a subgroup $M$ 
of $G$ having nonsolvable length $n$ and such that the components of $S_1(M)/R_1(M)$
are in $\mathcal{L}$. This last condition implies that $M<G$. By induction 
$M$, and therefore $G$, contains an $n$-rarefied subgroup, which is the last 
contradiction.\qed

\bigskip 
 
The next result is useful when  dealing with permutation or linear groups.

\begin{prop}\label{nearly faithful} 
Let $G$ be a group in $\Lambda_n$ and $H$ an $n$-rarefied 
subgroup of $G$. Then 
\begin{enumerate}
\item if $G$  acts faithfully on the set $\Omega$, there is an  $H$-orbit 
$\Delta$, such that $H/C_H(\Delta)$ is in $\Lambda_n$;
\item if $G$ acts faithfully of the finite dimensional $\mathbb F$-vector space $V$, 
there exist an $H$-invariant irreducible section $W$ of $V$, such that $H/C_H(W)$ 
is in $\Lambda_n$.
\end{enumerate}
\end{prop}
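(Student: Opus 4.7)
The approach for both parts is to reduce to Lemma \ref{subdirect3} via a decomposition of the action of $H$ into irreducible pieces. The idea is to produce normal subgroups of $H$ (or of a mild quotient) whose intersection is trivial and each of which is the pointwise kernel on an irreducible $H$-piece; the lemma then forces at least one such pointwise quotient to reach nonsolvable length equal to $\lambda(H)=n$.

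For part (1), I would decompose $\Omega$ into the $H$-orbits $\Delta_1,\ldots,\Delta_r$ and set $N_i = C_H(\Delta_i)$, the kernel of the action of $H$ on $\Delta_i$. These are normal in $H$, and since $G$ acts faithfully on $\Omega$ and $H\leq G$ we have $\bigcap_i N_i = C_H(\Omega) = 1$. Applying Lemma \ref{subdirect3} yields $n = \lambda(H) = \max_i \lambda(H/N_i)$, so some orbit $\Delta=\Delta_i$ produces the required quotient $H/C_H(\Delta)\in\Lambda_n$.

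For part (2) I would fix a composition series $0=V_0 < V_1 < \cdots < V_k = V$ of $V$ as an $\mathbb{F}H$-module, so that each composition factor $W_i=V_i/V_{i-1}$ is an irreducible $H$-invariant section. Setting $N_i = C_H(W_i)$ (normal in $H$) and $K=\bigcap_i N_i$, every element of $K$ stabilizes the flag and acts trivially on each quotient, hence is represented, in a basis refining the flag, by an upper-unitriangular matrix. So $K$ is a finite unipotent linear group: trivial if $\mathrm{char}(\mathbb{F})=0$ and a $p$-group if $\mathrm{char}(\mathbb{F})=p>0$; in either case $K$ is solvable. Lemma \ref{trivial}(3) then gives $\lambda(H/K)=\lambda(H)=n$, and applying Lemma \ref{subdirect3} to $H/K$ with the normal subgroups $N_i/K$ (whose intersection is trivial by construction) yields $n = \max_i \lambda((H/K)/(N_i/K)) = \max_i \lambda(H/N_i)$. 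Choosing an index where the maximum is attained produces the desired irreducible section $W=W_i$ with $H/C_H(W)\in\Lambda_n$.

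The only step that is not a direct application of definitions or of Lemma \ref{subdirect3} is the observation in part (2) that $K$ is solvable; everything else is formal. I therefore do not anticipate a genuine obstacle, the $n$-rarefied hypothesis on $H$ entering only through the equality $\lambda(H)=n$ needed to pick out the correct section.
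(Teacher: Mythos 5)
Your proof is correct and follows essentially the same route as the paper's: decompose the action into orbits (resp.\ composition factors), take the kernels $N_i$, and apply Lemma \ref{subdirect3}. The one place you go beyond the paper is part (2): the paper simply asserts that ``the same argument works'' with the $N_i$ the kernels on the factors of an $H$-composition series, silently passing over the fact that $\bigcap_i N_i$ need not be trivial there; your observation that this intersection is unipotent, hence solvable, so that Lemma \ref{trivial}(3) gives $\lambda(H/K)=\lambda(H)$ and Lemma \ref{subdirect3} can be applied to $H/K$, supplies exactly the justification the paper omits.
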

\begin{proof} Let $\{N_i \mid i=1, \dots , l\}$ be the set of kernels 
of the action of $H$ on its orbits, we have $\cap_{i=1}^l N_i=1$. 
By Lemma \ref{subdirect3} 
we must have $H/N_i\in \Lambda_n$ for at least one index $i\in\{1,\ldots,l\}$, 
proving the claim.
When $G\leq \mathrm{GL}(V)$, the same argument works, if the $N_i$ are the 
kernels of the action of $H$ on the factors of an $H$-composition series in $V$. 
\end{proof}

\section{Some applications}\label{application}

In this section we will apply the result obtained to the study of some 
questions concerning the nonsolvable length.\\	

Our first aim is to improve the bound obtained in \cite[Theorem 
1.1.(a)]{khukhro}, where the authors showed that $\lambda(G)\leq 2L_2(G)+1$, 
being 
$$L_2(G)=\max\{l_2(H)\mid H \textrm{ is a solvable subgroup of } G\}$$
and $l_2(H)$ the minimal number of $2$-factors in a  $2\,2'$-series 
of the solvable group $H$.

\begin{lemma}\label{bound} Let $G$ be an $n$-rarefied group with trivial 
Frattini subgroup. 
Then $G$ contains a solvable subgroup $H$ such that $l_2(H/O_2(H))\geq n$.
\end{lemma}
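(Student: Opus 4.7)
The plan is to proceed by induction on $n$.

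For the base case $n=1$: since $R_1(G) = \Phi(G) = 1$, the subgroup $S_1(G)$ is the unique nonabelian minimal normal subgroup of $G$ and is a direct product of copies of a simple $S \in \mathcal{L}$. I would apply Lemma \ref{simple2} to pick a $2$-nilpotent self-normalizing $D \leq S$ with $O_2(D)=1$, and Lemma \ref{extend} to lift it to $D^{*} \leq S_1(G)$ extending from $S_1(G)$ to $G$. Taking $H = D^{*}$ gives a $2$-nilpotent subgroup with $O_2(H) = 1$ and nontrivial Sylow $2$-subgroup, so $l_2(H/O_2(H)) = l_2(H) = 1$.

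For the inductive step, let $G$ be $n$-rarefied with $\Phi(G)=1$. First I set up the bottom $2$-nilpotent layer as in the base case: write $D^{*} = Q^{*} \rtimes T^{*} \leq S_1(G)$ (with $Q^{*} = O_{2'}(D^{*})$ and $T^{*}$ a Sylow $2$-subgroup), and put $M := N_G(D^{*})$, so that $G = S_1(G) M$ and $M \cap S_1(G) = D^{*}$, whence $M/D^{*} \simeq G/S_1(G)$. Next I observe, via Proposition \ref{quotient}, that $G/R_2(G) = (G/S_1(G))/\Phi(G/S_1(G))$ is $(n-1)$-rarefied, and its Frattini subgroup is trivial: the unique minimal normal subgroup of $G/R_2(G)$ is nonabelian, forcing $R_1(G/R_2(G)) = 1$, and for rarefied groups the definition imposes $\Phi = R_1$. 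The inductive hypothesis then yields a solvable $\bar K \leq G/R_2(G)$ with $l_2(\bar K/O_2(\bar K)) \geq n-1$. I lift $\bar K$ to $\tilde K \leq M$ by taking the full preimage under $M \twoheadrightarrow G/S_1(G) \twoheadrightarrow G/R_2(G)$; then $\tilde K$ contains $D^{*}$ and is solvable (as $D^{*}$, the Frattini part, and $\bar K$ are all solvable).

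The crux is the estimate $l_2(\tilde K / O_2(\tilde K)) \geq n$. Setting $\hat K := \tilde K/O_2(\tilde K)$, the containment $O_2(\tilde K) \cap D^{*} \leq O_2(D^{*}) = 1$ makes the image $\check D$ of $D^{*}$ in $\hat K$ isomorphic to $D^{*}$ and normal, with $Q^{*}$ surviving as a characteristic $2'$-Hall subgroup, hence normal in $\hat K$. Since $Q^{*}$ is a normal $2'$-subgroup, $l_2(\hat K) = l_2(\hat K/Q^{*})$, and in $\hat K/Q^{*}$ the image $T^{*} = \check D/Q^{*}$ is a nontrivial normal $2$-subgroup, providing a compulsory bottom $2$-layer of the lower $2',2$-series of $\hat K$. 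The further quotient $\hat K /(\check D \cdot O_{2'}(\hat K))$ is a quotient of $\bar K / O_2(\bar K)$ by a normal $2'$-subgroup (the image of $O_2(\tilde K)$ in $\bar K$ is a normal $2$-subgroup and hence contained in $O_2(\bar K)$), so by the inductive hypothesis its $2$-length is at least $n-1$; assembled with the bottom $T^{*}$-layer this should give $l_2(\hat K) \geq n$.

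The main obstacle will be the "no-merging" verification: ensuring that the bottom $2$-factor contributed by $T^{*}$ is not absorbed into the first $2$-factor of the quotient $\hat K/\check D$, which would collapse the count by one. I would resolve this by exploiting that $O_{2'}(\hat K) \cap \check D = Q^{*}$ (so that, after quotienting by $O_{2'}(\hat K)$, the image of $T^{*}$ is a distinguished nontrivial normal $2$-subgroup strictly separated from the upper $2$-layers) together with the invariance of $l_2$ under quotient by normal $2'$-subgroups. If this argument proves elusive in the full preimage, I would strengthen the inductive statement to also guarantee $O_2(\tilde K) = 1$ by replacing the full preimage with a minimal supplement of $\Phi(G/S_1(G))$ in $\tilde K$; then $l_2(\tilde K/O_2(\tilde K)) = l_2(\tilde K)$, and the bottom $T^{*}$-layer adds transparently to the $n-1$ layers inherited from $\bar K$.
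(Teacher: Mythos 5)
Your overall construction is the same as the paper's: the base case via Lemma \ref{simple2}, and in the inductive step the subgroup $D^{*}$, its normalizer (the paper works inside a subgroup $X$ with $G=S_1(G)X$ and $X\le N_G(D^{*})$, you take the full normalizer $M$ --- no essential difference), and the lift of a solvable subgroup from the $(n-1)$-rarefied quotient $G/R_2(G)$. The base case and the set-up of the inductive step are fine.

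The gap is exactly at the point you flag as ``the main obstacle'', and neither of your two proposed resolutions closes it. What you actually establish is a \emph{lower} bound $O_{2'2}(\hat K)>O_{2'}(\hat K)$ (from $O_{2'}(\hat K)\cap\check D=\check Q$ and $\check T\ne 1$), which yields $l_2(\hat K)=1+l_2(\hat K/O_{2'2}(\hat K))$; but to add this $1$ to the inductive $n-1$ you need an \emph{upper} bound on $O_{2'2}(\hat K)$, namely that its preimage in $\tilde K$ lies inside the preimage of $O_2(\bar K)$. Nothing you say rules out the following scenario: the preimage $O_{22'}(\tilde K)$ of $O_{2'}(\hat K)$ contains odd-order elements permuting the components of $S_1(G)$ nontrivially; then $O_{2'}(\hat K)$ already reaches up into $\bar K$, and $O_{2'2}(\hat K)$ can swallow the first $2$-layer of $\bar K/O_2(\bar K)$ together with $\check T$, collapsing the count back to $n-1$. (Your claim that $\hat K/(\check D\cdot O_{2'}(\hat K))$ is a quotient of $\bar K/O_2(\bar K)$ by a normal $2'$-group is not the relevant inequality: since $\check D\cdot O_{2'}(\hat K)$ may be strictly smaller than $O_{2'2}(\hat K)$, a lower bound on $l_2(\hat K/(\check D\cdot O_{2'}(\hat K)))$ does not bound $l_2(\hat K/O_{2'2}(\hat K))$ from below.) Your fallback of arranging $O_2(\tilde K)=1$ is irrelevant, since the danger sits in $O_{2'}$ and $O_{2'2}$, not in $O_2$. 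The paper closes this gap with an argument you are missing: if $t$ is an element of a Hall $2'$-complement $T$ of $O_2(H)$ in $O_{22'}(H)$ which moves a component $S_i$, and $v_i$ is a nontrivial $2$-element of $D_i$, then $[t,v_i]=(v_i^{-1})^tv_i$ is a nontrivial $2$-element of $S_1(G)\cap O_{22'}(H)$, hence of $S_1(G)\cap O_2(H)\le O_2(D^{*})=1$, a contradiction; therefore $T$ normalizes every component and lies in $D^{*}C$, where $C$ (the kernel of the action on the components) is contained in the solvable-radical/Frattini part $F\le A$. Finally, a Sylow $2$-subgroup of the preimage $B$ of $O_{2'2}(\hat K)$ maps onto a normal $2$-subgroup of $H/D^{*}A\simeq L/A$, which is trivial because $O_2(L/A)=1$; this forces $B\le D^{*}A$, and only then does the count $l_2(H/O_2(H))\ge 1+(n-1)$ go through. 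Without this (or an equivalent) argument the proof is incomplete.
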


\begin{proof} We prove the Lemma by induction on $n$. 

If $n=1$ then $G$ is a simple group in $\mathcal{L}$ and an inspection of the 
sugbroups of $G$ reveals that the claim holds (see e.g. 
\cite[Section 6.5]{gorenstein}).

Assume the claim true for $n-1$ and choose $G\in \Lambda_n$ with $\Phi(G)=1$.
Write  $$ S_1(G)=S^*=\prod_{i}S_i $$
each $S_i\simeq S$ a simple group in $\mathcal L$. The group $G$ permutes the 
components and $G=S^*\cdot X$, where $X$ acts on the sets of components and, 
according to Lemma \ref{simple2}, it is chosen such that $X\leq N_G(D^*)$, where 
$D^*$ is a $2$-nilpotent subgroup of $S^*$ with $O_2(D^*)=1$ and 
$N_{S^*}(D^*)=D^*$ (we used Lemma \ref{extend}). \\
Note that $X/X\cap S^*$ is a $(n-1)$-rarefied group, therefore 
if we set $F/X\cap S^*=R_1(X/X\cap S^*)=\Phi(X/X\cap S^*)$, by inductive 
hypothesis we have that there exists a solvable subgroup $L/F\leq X/F$ 
such that, if $A/F=O_2(L/F)$, 
$$l_2\left(\frac{L/F}{A/F}\right)=l_2(L/A)\geq n-1.$$
Let also $C$ be the kernel of the permutation action of $X$ on $\{S_i\}$, 
$C=\bigcap_i N_X(S_i)$. 
Then $C/C\cap S^*$ embeds, as a subdirect  product, into
$\prod_{i}\mathrm{Out}(S_i)$, which is solvable. Therefore $C\leq F$.
We set $H=D^*L$ and we claim that $l_2(H/O_2(H))\geq n$.\\
Call $B/O_2(H)=O_{2'\, 2}(H/O_2(H))$. Then 
\begin{equation}\label{eq:bound}
l_2(H/O_2(H))=l_2(H/B)+1.
\end{equation}
Note that 
$$\frac{H}{D^*A}=\frac{(D^*A)L}{D^*A}\simeq \frac{L}{D^*A\cap L}=
    \frac{L}{A(D^*\cap L)}=\frac{L}{A},$$
since $D^*\cap L\leq S^*\cap X\leq F\leq A$.
It follows that if we show
\begin{equation}\label{eq:bound_2}
B\leq D^*A
\end{equation}
then 
$$l_2(H/B)\geq l_2(H/D^*A)=l_2(L/F)\geq n-1$$
and so, by (\ref{eq:bound}),
$$l_2(H/O_2(H))\geq n.$$
Now, $O_2(H)$ centralizes every $U_i\leq U^*$, thus $O_2(H)\leq D^*C\leq D^*A$. 
Let $R=O_{2\, 2'}(H)=O_2(H)\rtimes T$, where $T$  exists by Shur-Zassenhauss 
and has odd order, so that $B=O_{2\, 2'\, 2}(H)=RP$ for any Sylow $2$-subgroup 
$P$ of $B$. 
Let $t$ be any element of $T$. If $S_i^t=S_j$ for $j\neq i$, then chosen a 
$2$-element $v_i\in D_i$, we have that
$$[t,v_i]=(v_i^{-1})^tv_i$$ 
is a nontrivial $2$-element of $S^*\cap R$, thus a nontrivial $2$-element of 
$S^*\cap O_2(H)$. But $S^*\cap O_2(H)\leq O_2(D^*)=1$. We have therefore that 
$T$ normalizes every component, and thus 
$T\leq D^* C\leq D^*A$, forcing $R$ itself to be contained in $D^*A$. \\
Finally note that
$B=RP\leq (D^*A)P$ 
and so $(D^*A)P=(D^*A)B$ is a normal subgroup of $H$ such that
$$\frac{(D^*A)B}{D^*A}\simeq \frac{P}{P\cap D^* A}$$
is a $2$-group. Therefore 
$$\frac{(D^*A)B}{D^*A}\leq O_2\left(\frac{H}{D^*A}\right)=
                                 O_2\left(\frac{L}{A}\right)=1,$$
proving that $B\leq D^*A$.
\end{proof}

\bigskip

The improvement of the bound obtained in \cite[Theorem 1.1]{khukhro} follows 
easily. 

\bigskip

\begin{thm}\label{bound1} Let $G$ be any finite group. Then $\lambda(G)\leq L_2(G)$.
\end{thm}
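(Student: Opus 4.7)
The plan is to combine Theorem \ref{main} with Lemma \ref{bound} in the obvious way, after arranging the Frattini hypothesis of Lemma \ref{bound}. Set $n = \lambda(G)$; if $n = 0$ the statement is vacuous, so assume $n \geq 1$. By Theorem \ref{main}, $G$ contains an $n$-rarefied subgroup $K$. I would then pass to the quotient $\bar{K} = K/\Phi(K)$. By Proposition \ref{quotient}, $\bar{K}$ is $m$-rarefied for some $m \leq n$; but since $\Phi(K)$ is nilpotent (hence solvable) and $\lambda$ is unchanged when we factor out a solvable normal subgroup contained in $R_1(K)$, we have $\lambda(\bar{K}) = n$, forcing $m = n$. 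Also $\Phi(\bar{K}) = 1$ by the standard property of the Frattini subgroup, so $\bar{K}$ satisfies the hypotheses of Lemma \ref{bound}.

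Applying Lemma \ref{bound} to $\bar{K}$ produces a solvable subgroup $\bar{H} \leq \bar{K}$ with
\[
l_2\bigl(\bar{H}/O_2(\bar{H})\bigr) \geq n.
\]
Now I would lift $\bar{H}$ to the subgroup $H \leq K$ containing $\Phi(K)$ with $H/\Phi(K) = \bar{H}$. Since $\Phi(K)$ and $\bar{H}$ are both solvable, so is $H$; thus $H$ is a solvable subgroup of $G$.

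It remains to compare $l_2$'s. Any $2\,2'$-series of $H$ projects to a $2\,2'$-series of $H/\Phi(K) = \bar{H}$ with no more $2$-factors, and likewise any $2\,2'$-series of $\bar{H}$ projects to one of $\bar{H}/O_2(\bar{H})$ (since $O_2(\bar{H})$ is a normal $2$-subgroup) with no more $2$-factors. Therefore
\[
l_2(H) \;\geq\; l_2(\bar{H}) \;\geq\; l_2\bigl(\bar{H}/O_2(\bar{H})\bigr) \;\geq\; n,
\]
and since $H$ is a solvable subgroup of $G$,
\[
L_2(G) \;\geq\; l_2(H) \;\geq\; n \;=\; \lambda(G),
\]
as required.

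The two technical results (Theorem \ref{main} and Lemma \ref{bound}) have already done all of the real work, so no step here is a genuine obstacle; the only subtle point is the verification that passing from $K$ to $K/\Phi(K)$ neither drops the nonsolvable length nor destroys the $n$-rarefied property, and this is handled by Proposition \ref{quotient} together with the elementary fact $\Phi(K) \leq R_1(K)$.
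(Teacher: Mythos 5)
Your proposal is correct and follows essentially the same route as the paper: take an $n$-rarefied subgroup via Theorem \ref{main}, apply Lemma \ref{bound} to its Frattini quotient, and lift the resulting solvable subgroup using the nilpotency of the Frattini subgroup to conclude $l_2(H)\geq n$. The paper compresses the verification that $K/\Phi(K)$ is still $n$-rarefied with trivial Frattini subgroup, which you spell out via Proposition \ref{quotient}; otherwise the arguments coincide.
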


\begin{proof} The group $G$ contains an $n$-rarefied subgroup $G_0$. 
By Lemma \ref{bound}, $G_0/\Phi(G_0)$ constains a solvable subgroup 
$H/\Phi(G_0)$ with $l_2(H/\Phi(G_0))\geq n$ and $O_2(H/\Phi(G_0))=1$.
As $\Phi(G_0)$ is nilpotent, $l_2(H)\geq l_2(H/\Phi(G_0))\geq n$.
Therefore $n\leq L_2(G_0)$ and, since we clearly have $L_2(G_0)\leq L_2(G)$, 
we obtain $n\leq L_2(G)$, as claimed. 
\end{proof}

\bigskip

Our next application of Theorem \ref{main} is related to a general problem 
studied in \cite{LMS}. In that paper  the authors address the following question.
\begin{quote}
Let $\mathcal{P}$ be a group theoretical property, and $G$ a finite 
 group possessing $\mathcal{P}$. What is the minimal number $d$ such that 
 $G$ has a $d$-generated subgroup possessing $\mathcal{P}$?
\end{quote} 
%
%

The following result answer the aforementioned question for the property  
$\mathcal{P}$ consisting of having nonsolvable length $n$. 
At the same time it is an improvement of \cite[Theorem 1.1]{DS}.

\begin{thm}\label{generators}
Let $G$ be any finite group. If $G$ is not solvable then there exists a 
$2$-generator subgroup $H$ of $G$ such that $\lambda(H)=\lambda(G)$.
\end{thm}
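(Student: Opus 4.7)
My plan is to combine Theorem \ref{main} with a classical generation result for monolithic groups. By Theorem \ref{main}, the nonsolvable group $G$ contains an $n$-rarefied subgroup $G_0$ (where $n=\lambda(G)$), and $\lambda(G_0)=n=\lambda(G)$. It therefore suffices to find a 2-generator subgroup of $G_0$ of nonsolvable length $n$. I will show, by induction on $n$, that every $n$-rarefied group is itself 2-generated, and then take $H=G_0$.

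For the base case $n=1$, I would argue as follows. Since $\lambda(G_0)=1$ we have $G_0=S_1(G_0)$, so $G_0/R_1(G_0)=S_1(G_0)/R_1(G_0)$; by condition (1) of Definition \ref{minimal} this is the unique minimal normal subgroup of itself, hence it has no proper non-trivial normal subgroup, so it must be a nonabelian simple group in $\mathcal{L}$. Combined with $R_1(G_0)=\Phi(G_0)$ from condition (3), we get that $G_0/\Phi(G_0)$ is a 2-generated simple group; since $d(G_0)=d(G_0/\Phi(G_0))$, we conclude $d(G_0)=2$.

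For the inductive step ($n\geq 2$), I would first verify directly from Definition \ref{minimal} (or using Proposition \ref{quotient}) that $\bar G_0 := G_0/S_1(G_0)$ is $(n-1)$-rarefied: its $RS$-series is obtained by shifting the indices of the $RS$-series of $G_0$, and the three defining conditions carry over. By the inductive hypothesis, $d(\bar G_0)\leq 2$. Next, I would turn to the quotient $G_0/\Phi(G_0)$, which has trivial Frattini subgroup and unique minimal normal subgroup $N:=S_1(G_0)/R_1(G_0)\cong S^r$ with $S\in\mathcal{L}$, so it is a monolithic group with nonabelian socle. Invoking the classical generation theorem of Dalla Volta--Lucchini for such groups, namely $d(L)\leq\max\{2,d(L/N)\}$ whenever $L$ has trivial Frattini and $N$ is the unique nonabelian minimal normal subgroup, I obtain $d(G_0/\Phi(G_0))\leq\max\{2,d(\bar G_0)\}=2$, whence $d(G_0)=2$.

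The main obstacle will be justifying the Dalla Volta--Lucchini-type generation statement, which is essentially the refinement of \cite[Theorem 1.1]{DS} that the present paper is improving. In the restricted $\mathcal{L}$-setting of rarefied groups, where the socle components belong to an explicit finite list of small simple groups and every $S\in\mathcal{L}$ is 2-generated, one could alternatively develop a self-contained Gasch\"utz-style lifting argument that combines the 2-generation of the $(n-1)$-rarefied quotient $\bar G_0$ with the explicit structural information about subgroups of $S$ encoded in Lemmas \ref{simple1} and \ref{simple2}.
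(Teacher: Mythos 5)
Your proposal is correct and follows essentially the same route as the paper: the paper likewise reduces via Theorem \ref{main} to showing by induction on $n$ that every $n$-rarefied group is $2$-generated, passing to the Frattini quotient (which is monolithic with nonabelian socle) and invoking the generation theorem for groups with a unique minimal normal subgroup, citing Lucchini--Menegazzo \cite{LM} where you cite Dalla Volta--Lucchini --- in the nonabelian-socle case these give the same bound $d(L)\leq\max\{2,d(L/N)\}$. No genuinely different idea is involved, so no further comparison is needed.
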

\begin{proof} Let $n\geq 1$ and $H$ an $n$-rarefied group. We prove that 
$H$ is $2$-generated. 
It is known, as a byproduct of the classification of finite simple groups, 
that every simple group can be generated by two elements. So, if $H$ is 
$1$-rarefied, then $H/\Phi(H)$ is $2$-generated. 
Clearly $H$ is $2$-generated as well. Assume our claim true for groups of 
nonsolvable length at most $n-1$ and choose $H$ any $n$-rarefied 
group. It is harmless to assume that $\Phi(H)=1$, since any set of 
elements generating $H$ modulo $\Phi(H)$, also generates $H$.
Thus $S_1(H)$ is the unique minimal normal subgroup of $H$, and we can use 
the main result of \cite{LM} to see that the minimal number of generators 
of $H/S_1(H)$ is the same as the minimal number of generators of $H$. 
The inductive hypothesis yelds that $H$ is $2$-generated and Theorem \ref{main}
completes the proof. 
\end{proof}

\bigskip

We consider now a problem of different nature. 

For any natural number $m$ and any field $\mathbb F$, define
\begin{align*}
\lambda (m)&=\max\{\lambda (G) \mid G\leq \mathrm{Sym}(m)\} \\
\lambda_\mathbb F(m)&=
\max\{\lambda (G) \mid G\leq \mathrm{GL}(m,\mathbb F)\}
\end{align*}

We shall use Theorem \ref{main} to find  $
\lambda (m)$ and $ \lambda_\mathbb F(m)$ for all $m$ and $\mathbb F$. 
We start by establishing lower bounds for the degree of permutation 
and linear  representations of $n$-rarefied groups.

\bigskip

\begin{lemma}\label{inequality}
Let $n,k$ be natural numbers with $1\leq k<n$. Then $(n-k)5^k\geq n$.
\end{lemma}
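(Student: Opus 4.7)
The plan is to turn this into a one-line algebraic manipulation plus an elementary estimate. Write the desired inequality $(n-k)5^k \geq n$ in the equivalent form $n(5^k-1) \geq k\cdot 5^k$, i.e.\ $n \geq k + \dfrac{k}{5^k - 1}$. So the statement is really a lower bound on $n$ in terms of $k$.

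Next I would use the integer hypothesis $k < n$, which gives $n \geq k+1$. Hence it suffices to show that $k+1 \geq k + \dfrac{k}{5^k-1}$, or equivalently $5^k \geq k+1$. This last inequality is a standard and easy fact for all $k\geq 1$: either observe $5^k \geq 2^k \geq k+1$, or verify it by a trivial induction on $k$ (the step being $5^{k+1} = 5\cdot 5^k \geq 5(k+1) \geq k+2$).

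The main obstacle is essentially nonexistent: the only content is the switch from the multiplicative form to the additive form $n \geq k + k/(5^k-1)$, after which the integrality of $n$ does all the work. I would present the argument in two displayed equivalences followed by a one-line verification of $5^k \geq k+1$, without invoking any deeper machinery. Note that the constant $5$ plays no special role here (any base $\geq 2$ would suffice), which is consistent with the fact that the lemma will only be applied later to estimate degrees in a context where the number $5$ comes from the minimal order of a simple group in $\mathcal{L}$.
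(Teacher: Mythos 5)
Your argument is correct. The rearrangement $(n-k)5^k\geq n \iff n(5^k-1)\geq k\cdot 5^k \iff n\geq k+\frac{k}{5^k-1}$ is valid, the reduction via $n\geq k+1$ to the inequality $5^k\geq k+1$ is exact, and that last inequality holds for all $k\geq 1$ by your induction (or via $5^k\geq 2^k\geq k+1$). However, your route differs from the paper's: the authors treat $k$ as a real variable and show that $f(x)=(n-x)5^x$ is strictly increasing on $[1,\,n-1/\log 5]$, an interval containing $[1,n-1]$, so that the minimum over the admissible integers is attained at $k=1$, where $f(1)=5(n-1)\geq n$ for $n\geq 2$. Their calculus argument identifies the extremal case explicitly and in fact yields the stronger bound $(n-k)5^k\geq 5(n-1)$, whereas your approach is entirely elementary, uses only the integrality of $n$ through $n\geq k+1$, and makes transparent that the whole content of the lemma is the trivial estimate $5^k\geq k+1$ (so that, as you note, any base $\geq 2$ would do). Either proof is perfectly adequate for the single application in Lemma \ref{degree}.
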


\begin{proof} It is easy to check that the function $f(x)= (n-x)5^x$ is strictly 
increasing in the interval $[1, n-1/\log(5)]$ so that, when restricted 
to $[1,n-1]\cap \mathbb N$, it attains its minimum at $x=1$. 
Since $f(1)=5(n-1)>n$, the Lemma is proved. 
\end{proof}

\bigskip

\begin{lemma}\label{degree} Let $G$ be any finite group 
of nonsolvable length $n$ acting faithfully on the set $\Omega$. 
Then $\left| \Omega\right|\geq 5^n$.
\end{lemma}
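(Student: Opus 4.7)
The plan is to prove the bound by induction on $n$, with a secondary induction on $|G|$ where needed. First, Theorem~\ref{main} produces an $n$-rarefied subgroup $H\leq G$, Proposition~\ref{nearly faithful}(1) yields an $H$-orbit $\Delta$ such that $H/C_H(\Delta)\in\Lambda_n$, and Proposition~\ref{quotient} ensures this quotient is still $n$-rarefied. Since $|\Omega|\geq|\Delta|$, it suffices to prove the bound for an $n$-rarefied group acting faithfully and transitively, which I now assume $G$ to be. The base cases $n=0,1$ are immediate: $\Sym(m)$ is solvable for $m\leq 4$, so any nonsolvable group needs degree at least $5$.

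The main engine is a block decomposition: if some nontrivial $N\nor G$ is not transitive on $\Omega$, let $N^*$ be the kernel of $G$'s action on the $N$-orbits. Then $N^*$ is a proper normal subgroup containing $N$, and $G/N^*$ acts faithfully on the set of blocks. With $k=\lambda(G/N^*)$, Lemma~\ref{lambda_min} (using that $G$ is $n$-rarefied) gives $\lambda(N^*)=n-k$. For $1\leq k\leq n-1$, the primary induction gives $|\mathrm{blocks}|\geq 5^k$; Lemma~\ref{subdirect3} together with the $G$-conjugacy of blocks produces a quotient of $N^*$ of length $n-k$ acting faithfully on a single block $B$, hence by induction $|B|\geq 5^{n-k}$, and $|\Omega|\geq 5^n$. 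The boundary cases $k=0$ (where $\lambda(N^*)=n$, $N^*<G$) and $k=n$ (where $N^*$ is solvable, $N^*\neq 1$, and $|G/N^*|<|G|$) are handled by the secondary induction on $|G|$ applied respectively to $N^*$ and to $G/N^*$.

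The main obstacle is locating a suitable $N$, and this is the most delicate step. If $\Phi(G)\neq 1$, take $A\leq Z(\Phi(G))$ an abelian minimal normal subgroup of $G$; a transitive abelian action would be regular, giving $G=A\cdot G_\omega$, but this contradicts $A\leq\Phi(G)$ being a non-generator, so $A$ is non-transitive and we apply the block argument with $N=A$. If $\Phi(G)=1$, then Lemma~\ref{trivial}(4) gives $C_G(S_1(G))=1$, so $G\hookrightarrow\Aut(S_1(G))=\Aut(S)\wr\Sym(r)$ with $S_1(G)\simeq S^r$, $S\in\mathcal{L}$; this overgroup has nonsolvable length at most $2$, forcing $n\leq 2$. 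If $S_1(G)$ is not transitive on $\Omega$, the block argument with $N=S_1(G)$ closes the proof; otherwise $n=2$ and the length forces $r\geq 5$, so applying the standard bound $\mu(T^r)\geq r\mu(T)$ for the minimum faithful permutation degree of $T^r$ ($T$ nonabelian simple, realized by $r$ disjoint copies of the minimal action of $T$) to $S_1(G)\simeq S^r$ acting faithfully on $\Omega$ gives $|\Omega|\geq r\mu(S)\geq 5\cdot 5=5^2$.
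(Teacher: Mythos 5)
Your reduction to a transitive $n$-rarefied group and your block--kernel induction for the imprimitive situation are in the same spirit as the paper's argument and are essentially sound. The proof breaks down, however, at the case $\Phi(G)=1$ with $S_1(G)$ transitive. There you embed $G$ into $\Aut{S}\wrr\Sym{r}$, observe that this overgroup lies in $\Lambda_2$, and conclude $n\le 2$. This step is invalid: nonsolvable length is not monotone under passage to subgroups. For instance $\Alt{5}\wrr\Alt{5}\in\Lambda_2$ embeds into the simple group $\Alt{25}\in\Lambda_1$; the failure of exactly this monotonicity is why Theorem \ref{lambda(m)} is nontrivial in the first place. A concrete counterexample to your conclusion: take $U=\Alt{5}\wrr\Alt{5}$ in product action on $5^5$ points and $G=U\wrr\Alt{5}$ in product action on $5^{25}$ points. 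Then $G$ is $3$-rarefied, $\Phi(G)=1$, and $S_1(G)\simeq\Alt{5}^{25}$ is the unique minimal normal subgroup and is transitive on $\Omega$, so your block argument never gets started and your final bound $|\Omega|\ge r\mu(S)$ cannot produce $5^n$ for $n\ge 3$.

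This configuration is precisely the primitive case that the paper handles via the O'Nan--Scott theorem (\cite[Theorem 4.6A]{dixon}), distinguishing a regular socle, diagonal action, and product action; in the product-action case the degree is $|\Delta|^{|\Gamma|}\ge\left(5^{n-k}\right)^{5^k}$ and Lemma \ref{inequality} is what recovers $5^n$ from this. Some such multiplicative count is unavoidable when the socle is transitive, so the gap is structural rather than a matter of adjusting constants, and your proof as written does not establish the lemma for $n\ge 3$.
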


\begin{proof}
Of course we may assume that $G$ acts transitively on $\Omega$.
Moreover by Theorem \ref{main} and Proposition \ref{nearly faithful}, $G$ can be 
considered to be $n$-rarefied.\\
We prove the claim by induction on $n$. 

Assume $n=1$. If $\Phi(G)=1$ then $G$ is a simple group in $\mathcal L$. Being the minimal degree 
of a faithful representation $5$, the claim holds. 
If $\Phi(G)\not=1$ then $G$ can not be primitive because, in this case, 
$\Phi(G)$ would be transitive and then, for every $\omega\in \Omega$, we would 
have $G=\Phi(G) G_\omega$, which is clearly impossible. Thus 
the orbits of $\Phi(G)$ form a system of non-trivial blocks for $G$. Also,
$G/\Phi(G)$ acts faithfully and transitively on the set of $\Phi(G)$-orbits and
we may therefore conclude as above. 

Assume $n>1$ and suppose the claim is true for $\Lambda_k$-groups with $1\leq k<n$.\\
Consider first the case $G$ primitive. As we have pointed out before, 
in this situation the Frattini subgroup must be trivial.
Thus $S_1(G)$ is the socle of $G$ and its components, all isomorphic to a fixed 
simple group $S\in \mathcal L$, are permuted by $G/S_1(G)$. If $K/S_1(G)$ is the 
kernel of this action, then $K/S_1(G)$ is solvable and, therefore 
$K\leq R_2(G)$ and $G/K$ is a $(n-1)$-rarefied group. 
The inductive hypothesis  shows that $S_1(G)$ has at least $5^{n-1}$ components. 
Clearly $S_1(G)$ acts transitively on $\Omega$. 
If $S_1(G)$ acts regularly, then 
$$ \left|\Omega \right|= \left|S_1(G) \right|\geq 
            \left|\mathrm{Alt}(5) \right|^{5^{n-1}}\geq 5^n. $$
When the socle is not regular we can use 
\cite[Theorem 4.6A]{dixon}.
The only possibilities, in our situations, are the following:
\begin{enumerate}
\item $S_1(G)$ acts in diagonal action, so that 
$\left|\Omega \right|=\left|S_1(G) \right|/\left| S\right|$, or 
\item $G$ is a transitive subgroup of 
$W= U\mathrm{wr}_\Gamma \mathrm{Sym}(\Gamma)$ where $U$ is primitive non-regular, 
$\Gamma$ has at least two elements, and $W$ acts in product action.
\end{enumerate}

If we are in case $1$, then the claim holds, because 
$\left|S_1(G) \right|/\left| S\right|$ is at least $ 60^{5^{n-2}}$ 
which is bigger than $5^n$.

Assume then we are in case $2$, so that the set $\Omega$ can be 
identified with $\Delta^\Gamma$, where $\Delta$ is a primitive $U$-set. 
If $B$ indicates the base subgroup of $W$, the group $G/G\cap B$ is 
isomorphic to a transitive subgroup of $\mathrm{Sym}(\Gamma)$. 
Moreover by Proposition \ref{quotient}, $G/G\cap B$ is a 
$k$-rarefied group for some $1\leq k< n$.
Fix $j\in \Gamma$ and let $N_j$ be the kernel of the projection 
from $M=G\cap B$ onto the $j$-th component of $B$. The group $M$ embeds, 
as a subdirect product, into $\prod_{i\in\Gamma}M_i$ where all 
factors $M_i= M/N_i$ are isomorphic, since $G$ acts transitively 
on $\Gamma$. If $M_j$ belongs to $\Lambda_l$, then $M$ itself is in 
$\Lambda_l$, because of Lemma \ref{subdirect3}.  From Lemma \ref{lambda_min}, 
it follows that $n=k+l$. Therefore, using Lemma \ref{nearly faithful} and 
the induction, we have that $\left|\Delta\right|\geq 5^{l}$. Thus
$$ \left| \Omega\right|=
     \left|\Delta\right|^{\left|\Gamma\right|}\geq (5^{n-k})^{5^k}\geq 5^n $$
the last inequality holds by Lemma \ref{inequality}.\\
It remains to handle the case when $G$ is imprimitive. 
Let  $\Sigma$ be the system of imprimitivity consisting of the 
$S_1(G)$-orbits and let $N$ be its stabilizer. 
Then $G/N$  acts transitively on $\Sigma$ and, if $G/N$ belongs to 
$\Lambda_k$, it is a $k$-rarefied group. Notice that, 
since $S_1(G)\leq N$, $k<n$. The inductive assumption gives 
$\left|\Sigma\right|\geq 5^k$. Using an argument similar to the one 
of the above paragraph, it is readily seen that each block in $\Sigma$ 
has size at least $5^{n-k}$. The claim follows easily. 
\end{proof}

\begin{thm}\label{lambda(m)} 
For every $m\geq 5$ we have $\lambda(m)=\lfloor \log_5(m)\rfloor$.
\end{thm}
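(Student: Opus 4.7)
The plan is to show matching upper and lower bounds.

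For the upper bound: if $G\le\Sym{m}$ realises the maximum $\lambda(m)$, the permutation action of $G$ on the $m$ points is faithful, so Lemma \ref{degree} gives $m\ge 5^{\lambda(G)}$. Since $\lambda(G)$ is a non-negative integer, $\lambda(m)=\lambda(G)\le\lfloor\log_5 m\rfloor$.

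For the lower bound, set $n=\lfloor\log_5 m\rfloor$, so that $5^n\le m$. I will exhibit an explicit subgroup of $\Sym{5^n}\hookrightarrow\Sym{m}$ of nonsolvable length exactly $n$, namely the $n$-fold iterated wreath product $W_n$ of copies of $\Alt{5}$. Inductively, set $W_1=\Alt{5}$ acting naturally on five points, and $W_k=\Alt{5}\wr W_{k-1}$ acting on $5^k$ points in the standard imprimitive fashion. It then remains to verify $\lambda(W_k)=k$ by induction on $k$.

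For the inductive step, let $B_k$ be the base subgroup of $W_k$, so that $B_k\cong(\Alt{5})^{5^{k-1}}$ and $W_k/B_k\cong W_{k-1}$. Since $\Alt{5}$ is simple non-abelian, $C_{W_k}(B_k)=Z(B_k)=1$; consequently any normal solvable subgroup of $W_k$ intersects $B_k$ trivially and centralises $B_k$, hence is itself trivial. Thus $R_1(W_k)=1$. Moreover $W_{k-1}$ permutes the simple factors of $B_k$ transitively (for $k\ge 2$), so every minimal normal nonabelian subgroup of $W_k$ lies inside $B_k$, giving $S_1(W_k)=B_k$ and $W_k/S_1(W_k)\cong W_{k-1}$. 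The inductive hypothesis $\lambda(W_{k-1})=k-1$ together with Lemma \ref{trivial} then yields $\lambda(W_k)=1+\lambda(W_{k-1})=k$. Via the embedding $W_n\le\Sym{5^n}\le\Sym{m}$ one concludes $\lambda(m)\ge n$, and together with the upper bound we obtain $\lambda(m)=\lfloor\log_5 m\rfloor$.

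The only step requiring any care is the identification of the $RS$-series of $W_k$ with its natural layer structure, but this presents no genuine obstacle: Lemma \ref{degree} already supplies the sharp upper bound, and the iterated wreath product of $\Alt{5}$'s is the textbook object realising it on $5^n$ letters.
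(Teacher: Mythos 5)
Your proof is correct and follows essentially the same route as the paper: the lower bound via the $n$-fold iterated wreath product of $\Alt{5}$ on $5^n\le m$ points, and the upper bound via Lemma \ref{degree} applied to the faithful degree-$m$ action (the paper routes this through Theorem \ref{main} and Proposition \ref{nearly faithful} explicitly, but since Lemma \ref{degree} is stated for arbitrary finite groups your direct invocation is legitimate). Your explicit inductive verification that $\lambda(W_k)=k$ merely fills in a detail the paper leaves to the reader.
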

\begin{proof}  Let $n$ be such that  $5^n\leq m< 5^{n+1}$, so that 
$n=\lfloor \log_5(m)\rfloor$. The symmetric group $\mathrm{Sym}(m)$ 
contains a subgroup $G$ isomorphic to the $n$-fold wreath product of 
$\Alt{5}$ in its natural action, acting on a set of $5^n$ points. 
Since $G$ is in $\Lambda_n$ (actually it is an $n$-rarefied 
group), it follows that $\lambda (m)\geq n=\lfloor \log_5(m)\rfloor$. 

Now let $G$ be a subgroup of $\mathrm{Sym}(m)$, whose nonsolvable 
length is $\lambda(m)$. By Theorem \ref{main} there exists 
$H\leq G$ which is a $\lambda(m)$-rarefied group and, 
using Lemma \ref{nearly faithful} and Lemma \ref{degree}, we infer 
that $m\geq 5^\lambda$. From this we get $\lambda(m)\leq \log_5(m)$. 
Therefore
$$  \lfloor \log_5(m)\rfloor \leq \lambda(m) \leq \log_5(m)  $$
and the equality $\lambda(m)=\lfloor \log_5(m)\rfloor$ follows.
\end{proof}

\begin{lemma}\label{proj} 
Let $G$ be a group with $G/\zeta(G)=\prod_{i=1}^l G_i$ where each $G_i$ 
is a finite simple nonabelian group, and $\mathbb{K}$ be 
any algebraically closed field. If the $\mathbb{K}$-vector space $V$ 
affords an irreducible  projective representation $\rho$ with 
$\ker (\rho)=\zeta(G)$, then $\dim_\mathbb F(V)\geq 2^l$.
\end{lemma}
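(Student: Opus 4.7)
\emph{Proof plan.} Since $\ker(\rho)=\zeta(G)$, the map $\rho$ factors through a faithful irreducible projective representation $\bar\rho$ of $\bar G := G/\zeta(G) = \prod_{i=1}^{l} G_{i}$ on the same space $V$, so it suffices to show $\dim V\geq 2^{l}$. The overall plan is to lift $\bar\rho$ to an ordinary linear representation of a central extension of $\bar G$ that still has the product form $\prod_{i}\tilde G_{i}$, and then invoke the classical tensor-product decomposition for irreducible representations of a direct product over an algebraically closed field.

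Each $G_{i}$ is nonabelian simple, hence perfect. A standard computation with the Schur multiplier of a direct product gives
$$
M(\bar G)\;\cong\;\bigoplus_{i=1}^{l}M(G_{i})\;\oplus\;\bigoplus_{i<j}\bigl(G_{i}^{ab}\otimes G_{j}^{ab}\bigr),
$$
and the cross terms vanish because the $G_{i}$ are perfect. Consequently, if $\tilde G_{i}$ denotes a Schur representation group of $G_{i}$ (itself perfect, with centre $M(G_{i})$), then $\tilde G:=\prod_{i=1}^{l}\tilde G_{i}$ is a representation group of $\bar G$, and therefore $\bar\rho$ lifts to an ordinary irreducible $\K$-linear representation $\tilde\rho\colon\tilde G\to\mathrm{GL}(V)$.

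Applying the standard fact that an irreducible linear representation of a direct product of finite groups over an algebraically closed field decomposes as an external tensor product of irreducibles of the factors, we get
$$
\tilde\rho\;\cong\;\tilde\rho_{1}\boxtimes\cdots\boxtimes\tilde\rho_{l},
$$
so that $\dim V=\prod_{i=1}^{l}\dim\tilde\rho_{i}$. If some $\dim\tilde\rho_{i}=1$, then $\tilde\rho_{i}$ would be a linear character of the perfect group $\tilde G_{i}$ and hence trivial; but then $\tilde G_{i}\leq\ker\tilde\rho$, which upon passing to $\bar G$ yields $G_{i}\leq\ker\bar\rho$, contradicting faithfulness. Thus every $\dim\tilde\rho_{i}\geq 2$ and $\dim V\geq 2^{l}$.

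The main technical obstacle is arranging the product-shaped lift $\tilde G=\prod\tilde G_{i}$: once this is in hand, everything else is standard representation theory. This step relies crucially on the perfectness of the simple factors $G_{i}$, which kills the $G_{i}^{ab}\otimes G_{j}^{ab}$ cross terms and makes $M(\bar G)$ split as a direct sum, so that the Schur covers of the factors assemble to a Schur cover of the product.
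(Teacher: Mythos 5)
Your argument is correct, but it takes a genuinely different route from the paper's. The paper proceeds by induction on $l$: it replaces $G^{\rho}$ by $H=G^{\rho}Z$ inside $\mathrm{GL}(V)$, takes the normal subgroup $K$ with $K/Z=G_{1}$, and invokes the Clifford-theoretic tensor decomposition for projective representations (Curtis--Reiner, Theorem 11.20) to write $V=U\otimes W$ with $\dim U\geq 2$ and $\tau\colon H/K\to\mathrm{GL}(W)$ a faithful projective representation of a product of $l-1$ simple groups, to which induction applies. You instead lift globally: perfectness of the $G_{i}$ kills the K\"unneth cross-terms $G_{i}^{ab}\otimes G_{j}^{ab}$ in $M(\prod G_{i})$, so the product of the Schur covers $\prod\tilde G_{i}$ is a representation group (indeed the universal central extension) of $G/\zeta(G)$, the projective representation lifts to an ordinary irreducible there, and the external tensor-product decomposition for direct products over an algebraically closed field finishes the argument, with each factor of dimension at least $2$ because a perfect group has no nontrivial linear characters. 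Your version is non-inductive and makes the role of perfectness transparent, at the cost of invoking the Schur-multiplier machinery (K\"unneth formula, universal property of representation groups); the paper's version needs only the single Clifford-theory decomposition and never constructs a cover. One small point you gloss over: in positive characteristic $p$ the group $H^{2}(\bar G,\mathbb{K}^{\times})$ is only the $p'$-part of $M(\bar G)$, but the lift to the representation group still exists, so this is harmless.
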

\begin{proof}
The claim is true when $l=1$, so assume $l\geq 2$ and that 
the result holds when the number of 
factors is smaller than $l$.
Let $H\leq\mathrm{GL}(V)$ be such that $H=G^\rho Z$, where $Z$ is the center of 
$\mathrm{GL}(V)$. It is then possible to apply 
\cite[Theorem 11.20]{curtis} to $H$.
Let $K$ be such that $K/Z= G_1$, the space $V$ can be decomposed as $U\otimes W$ and 
the action of $H$ is the tensor product of two projective representations
$\sigma: K\longrightarrow \mathrm{GL}(U)$ and 
$\tau: H/K\longrightarrow \mathrm{GL}(W)$. 
It is easy to see that $\tau$ has trivial kernel, because $H/K$ is the direct 
product of simple groups and the kernel of $\rho$ is $\zeta(G)$. Since 
the number of simple factors of $H/K$ is $l-1$, induction yelds 
$\dim(W)\geq 2^{l-1}$, while $U$ has dimension at least $2$. 
Thus $\dim(V)=\dim(U)\dim(W)\geq 2\cdot 2^{l-1}=2^l$, as claimed.
\end{proof}

\begin{lemma}\label{linear degree} 
Let $G$ be any $\Lambda_n$-group acting faithfully and irreducibly 
on the $\mathbb F$-vector space $V$, where $\mathbb{F}$ is any field.  
Then  $\dim(V)\geq 2\cdot 5^{n-1}$.
\end{lemma}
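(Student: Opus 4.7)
The strategy is to reduce to the case where $G$ is $n$-rarefied and $\mathbb{F}$ is algebraically closed, and then to argue by strong induction on $\dim V$, splitting according to whether $V|_T$ is $T$-homogeneous for $T = S_1(G)$. For the reduction, Theorem~\ref{main} yields an $n$-rarefied subgroup $H \leq G$, Proposition~\ref{nearly faithful}(2) provides an $H$-invariant irreducible section $W$ of $V$ with $H/C_H(W) \in \Lambda_n$, and Proposition~\ref{quotient} forces this quotient itself to be $n$-rarefied; since $\dim W \leq \dim V$, this reduces the problem to $n$-rarefied groups. Passing to an absolutely irreducible constituent of $V \otimes_{\mathbb{F}} \overline{\mathbb{F}}$---the Galois conjugates share the same kernel in $G$, which must be trivial by faithfulness---further reduces to the case $\mathbb{F}$ algebraically closed. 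The base case $n=1$ is then immediate, since a nonabelian $G$ admits no faithful irreducible representation of dimension less than $2$.

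For $n \geq 2$, decompose $V|_T = V_1 \oplus \cdots \oplus V_t$ into $T$-homogeneous components, permuted transitively by $G$. If $t \geq 2$, set $H = \mathrm{Stab}_G(V_1) \supseteq T$, $N = H_{\mathrm{core}}$, $K = \ker_H(V_1)$, and $k = \lambda(G/N) \leq \lambda(G/T) = n-1$. Lemma~\ref{degree} gives $t \geq 5^k$. Since $N \trianglelefteq G$ acts faithfully on $V$ and $\bigcap_{g \in G} g(N \cap K) g^{-1} = 1$, Lemma~\ref{subdirect3} combined with Lemma~\ref{trivial}(3) yields $\lambda(H/K) \geq \lambda(N/(N \cap K)) = \lambda(N) \geq n-k$. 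Applying the strong IH to $H/K$ acting faithfully and irreducibly on $V_1$ (of dimension strictly less than $\dim V$) gives $\dim V_1 \geq 2 \cdot 5^{n-k-1}$, whence $\dim V = t \cdot \dim V_1 \geq 5^k \cdot 2 \cdot 5^{n-k-1} = 2 \cdot 5^{n-1}$.

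If $t = 1$, let $U$ denote the irreducible $T$-type of $V|_T$; then $T$ acts faithfully on $U$. Writing $T/\Phi(G) = S^l$, Lemma~\ref{trivial}(4) gives $G/T \hookrightarrow \Out{S} \wr \Sym{l}$, and since $\Out{S}$ is solvable for $S \in \mathcal{L}$, Theorem~\ref{lambda(m)} yields $n-1 = \lambda(G/T) \leq \lambda(\Sym{l}) \leq \lfloor \log_5 l \rfloor$, so $l \geq 5^{n-1}$. Let $E = E(T)$ be the layer of $T$, a central product of the $l$ components $C_1, \ldots, C_l$ with $E/Z(E) = S^l$, and decompose $U|_E = U'_1 \oplus \cdots \oplus U'_{t'}$ into $E$-homogeneous components, permuted transitively by $T$. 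Each kernel $\ker_E(U'_j)$ corresponds, modulo $Z(E)$, to a subset $I_j \subseteq \{1, \ldots, l\}$ of common size $k$, and the faithfulness of $E$ on $U$ forces $\bigcap_j I_j = \emptyset$; hence the $t'$ complements (of size $l - k$) cover $\{1, \ldots, l\}$, giving $t' \geq l/(l-k)$. Lemma~\ref{proj} applied to $E/\ker_E(U'_j)$ (whose quotient by its center is $S^{l-k}$) gives $\dim U'_j \geq 2^{l-k}$, and therefore
\[
\dim V \geq \dim U = \sum_j \dim U'_j \geq t' \cdot 2^{l-k} \geq \frac{l}{l-k} \cdot 2^{l-k} \geq 2l \geq 2 \cdot 5^{n-1},
\]
where the minimum of $(l/m) \cdot 2^m$ over $m \in \{1, \ldots, l\}$ equals $2l$, attained at $m = 1, 2$.

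The main obstacle is the homogeneous case: one must combine the numerical bound $l \geq 5^{n-1}$ (from Theorem~\ref{lambda(m)} and the solvability of $\Out{S}$) with the dimensional estimate $\dim U \geq 2l$ obtained by applying Lemma~\ref{proj} to the layer of $T$ together with a covering argument for the $T$-orbit of the kernel-subsets $I_j$; making this precise requires careful bookkeeping about how $T$ permutes the components of $E(T)$ and the central identifications in the layer.
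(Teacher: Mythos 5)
Your reduction to $n$-rarefied groups over an algebraically closed field, your base case, and your non-homogeneous case ($t\geq 2$) are all sound; the latter is essentially the paper's imprimitive case, organized through Clifford theory for $T=S_1(G)$ instead of an arbitrary system of imprimitivity, and the bookkeeping with $N=\mathrm{core}_G(H)$, Lemma \ref{subdirect3} and Lemma \ref{degree} is correct. The gap is in the homogeneous case $t=1$, and it is not the ``careful bookkeeping'' you flag at the end: it is the very existence of the layer you use. You assert that $E=E(T)$ is a central product of $l$ quasisimple components with $E/Z(E)\cong S^l$. But $T=S_1(G)$ is only an extension of the solvable group $\Phi(G)=R_1(G)$ by $S^l$, and nothing forces this extension to be central. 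If $\Phi(G)$ is nonabelian (say of extraspecial type) with the simple factors of $T/\Phi(G)$ acting nontrivially on it, then since $[E(T),F(T)]=1$ the layer of $T$ cannot cover those factors; indeed $E(T)$ can be trivial (e.g.\ sections of the normalizer of an extraspecial $2$-group in $\mathrm{GL}_4(\mathbb{C})$ of shape $2^{1+4}\cdot\Alt{5}$ are $1$-rarefied, perfect, act faithfully and homogeneously, and have trivial layer). The same failure occurs when $\Phi(G)$ is abelian but not central in $T$. In all these situations Lemma \ref{proj} simply does not apply to $E(T)$, and your covering argument over the index sets $I_j$ has nothing to act on.

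This is exactly the difficulty the paper's proof is built around: after disposing of the imprimitive case by induction, it splits the primitive case according to whether $\Phi(G)$ is abelian. If it is, primitivity over an algebraically closed field forces $\Phi(G)\leq\zeta(G)$, so $T$ really is a central extension of $S^l$ and Lemma \ref{proj} applies (giving the much stronger bound $2^l$ with $l\geq 5^{n-1}$, so no covering argument is needed). If $\Phi(G)$ is nonabelian, an entirely different argument is required: one invokes \cite[Theorem 3.3]{we} to produce a symplectic-type section $\ov{P}$ of $\Phi(G)$ on which $G/\Phi(G)$ acts faithfully, applies the already-established cases to that action to bound the rank of $\ov{P}$ from below, and then uses $(\dim V)^2\geq\vert A\vert$ to conclude. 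To repair your proof you would need to add both the reduction to the primitive case (your $t=1$ hypothesis does not imply primitivity) and this second, nonabelian-Frattini branch; your $\frac{l}{l-k}\cdot 2^{l-k}\geq 2l$ computation, while correct and rather neat where it applies, cannot substitute for it.
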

\begin{proof} If $\mathbb K$ is any algebraically closed field containing $\mathbb F$, 
the group $G$ acts on $W=V\otimes_\mathbb F \mathbb K$. 
By Proposition \ref{nearly faithful} we know that $G$ has a
$\Lambda_n$-subgroup acting faithfully on a $G$-composition factor of $W$. 
A lower bound on the  $\mathbb K$-dimension of such factor would entail 
a lower bound for the $\mathbb F$-dimension of $V$.  There is therefore 
no loss of generality, if we assume that $\mathbb F$ is algebraically closed.

If $n=1$ the result is clear since $G$ can not have representations of degree $1$. 

Assume the statement true up to $n-1$.
It is harmless to assume that $G$ is $n$-rarefied, because of 
Theorem \ref{main} and Proposition \ref{nearly faithful}.

When $G$ is imprimitive, let $\mathcal B=\{W_1, \dots, W_l\}$ be a 
system of imprimitivity, and $N$ the normalizer of $\mathcal B$ in $G$. 
If $k=\lambda(N)$  we have, by Proposition \ref{quotient} 
and Lemma \ref{lambda_min}, that $G/N$ is an $(n-k)$-rarefied group.
If $k=0$ then $G/N$ is in $\Lambda_n$ and therefore $\mathcal{B}$ must 
contain at least $5^n$ elements, by  Lemma \ref{degree}. In this case $V$ 
has dimension at least $5^n>2\cdot 5^{n-1}$. Assume then that $k$ is at least $1$.
The group  $N$ is a subdirect product of the groups $N/C_N(W_i)$ 
(which are all isomorphic) hence, by Lemma \ref{subdirect3}, each $N/C_N(W_i)$ 
is in $\Lambda_{k}$. The inductive hypothesis yelds 
$\dim(W_i)\geq 2\cdot 5^{k-1}$ and Lemma \ref{degree} tells us that 
$\mathcal B$ contains at least $5^{n-k}$ elements. These two facts give
\[
\dim(V)\geq 2\cdot 5^{n-k-1}\cdot 5^k= 2\cdot 5^{n-1}.
\]

It remains to consider the case $G$ primitive. 

Assume first that $\Phi(G)$ is abelian. When this happens $\Phi(G)$ is the 
center of $G$ and $S_1(G)/\Phi(G)\simeq\prod_{i=1}^lS_i$, where each $S_i$ 
is isomorphic to a simple group $S$ in $\mathcal L$. Moreover the module $V$ 
is the direct sum of copies of a single irreducible $S_1(G)$-module. 
It is therefore enough to prove that the claimed bound holds, when $V$ is 
already irreducible as a module for $S_1(G)$. By Lemma \ref{proj} we have 
that $\dim(V)\geq 2^l$. On the other hand, we can use Lemma \ref{degree}, 
applied to the group $G/S_1(G)$ in its action on the components of 
$S_1(G)/\Phi(G)$, to see that $l\geq 5^{n-1}$. Thus
$\dim(V)\geq 2^{5^{n-1}}\geq 2\cdot 5^{n-1}$.

It remains to treat the case $\Phi(G)$ not abelian. In this case 
$\zeta(\Phi(G))=\zeta(G)$ and there exists a subgroup $A$ such that $\zeta(\Phi(G))\leq A\leq \Phi(G)$ and 
$\left|A/\zeta(\Phi(G))\right|=r^2$ 
for some $r$ dividing $\dim(V)$, see \cite[Theorem 3.3]{we}. 
 The group 
 $$
B= A/\zeta(G)\cap \zeta (\Phi(G)/\zeta(G))
 $$
 is not trivial, so there is at least one prime $p$ such that
 $\ov{P}=P/\zeta(G)$,  the $p$-Sylow of $B$, is not $1$. The group $\ov{P}$ is elementary abelian (see \cite[Theorem 3.3]{we}).  Since  
$[P, C_G(\ov{P}), C_G(\ov{P})]=1$, the group  $C_G(\ov{P})$ is nilpotent and 
therefore contained in $\Phi(G)$. On the other hand  $\Phi(G)\leq C_G(\ov{P})$, 
hence the group $\ov{G}=G/C_G(\ov{P})=G/\Phi(G)$ is an $n$-rarefied 
group contained in $\mathrm{GL}(\ov{P})$. Since $\ov{G}$ has trivial 
Frattini group, we deduce, using the first part of this proof (the imprimitive case and the  primitive case for groups with abelian Frattini group),  that the dimension of $\ov{P}$ over the field with 
$p$ elements is at least $2\cdot 5^{n-1}$. Therefore 
$$
(\dim(V))^2\geq \left| A\right|\geq p^{2\cdot 5^{n-1}}\geq 2^{2\cdot 5^{n-1}}.
$$
It follows that $\dim(V)\geq 2^{5^{n-1}}\geq 2\cdot 5^{n-1}$. The proof is then concluded.
\end{proof}

We can now prove the analogous of Theorem \ref{lambda(m)} for linear groups.

\begin{thm}\label{lambda_F(m)} 
For every $m\geq 2$ and every field $\mathbb{F}$ with at least four elements, 
we have $\lambda_{\mathbb{F}} (m)=1+\lfloor \log_5(m/2)\rfloor$. 
When $\vert \mathbb{F}\vert \leq 3$, then 
$1+\lfloor \log_5(m/3)\rfloor \leq \lambda_{\mathbb{F}} (m)\leq 1+
\lfloor \log_5(m/2)\rfloor$. 
\end{thm}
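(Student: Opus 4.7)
The plan is to follow the same pattern as in the proof of Theorem \ref{lambda(m)}: derive the upper bound from Theorem \ref{main} together with Proposition \ref{nearly faithful} and the dimensional lower bound of Lemma \ref{linear degree}, and establish the lower bound by constructing an explicit wreath product subgroup of $\mathrm{GL}(m,\mathbb{F})$.

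For the upper bound, which will hold uniformly in $\mathbb{F}$: given $G\leq\mathrm{GL}(m,\mathbb{F})$ with $\lambda(G)=n$, Theorem \ref{main} produces an $n$-rarefied subgroup $H\leq G$, and Proposition \ref{nearly faithful}(2) supplies an $H$-invariant irreducible section $W$ of $\mathbb{F}^m$ on which $H/C_H(W)$ still has nonsolvable length $n$. Lemma \ref{linear degree} then forces $\dim_\mathbb{F}(W)\geq 2\cdot 5^{n-1}$, so $m\geq 2\cdot 5^{n-1}$, and since $n-1$ is an integer we conclude $n\leq 1+\lfloor\log_5(m/2)\rfloor$.

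For the lower bound when $|\mathbb{F}|\geq 4$, set $n=1+\lfloor\log_5(m/2)\rfloor$, so that $2\cdot 5^{n-1}\leq m$. Define inductively $T_1=\mathrm{Alt}(5)$ and $T_k=T_{k-1}\wr\mathrm{Alt}(5)$ in its natural imprimitive action on $5^k$ points; a routine induction, using the transitivity of $T_k$ on its coordinates, shows $R_1(T_k)=1$, that the socle of $T_k$ equals the base group, and that $\lambda(T_k)=k$. Since $|\mathbb{F}|\geq 4$, the group $\mathrm{PSL}_2(\mathbb{F})$ is nonabelian simple, so $\mathrm{SL}_2(\mathbb{F})\leq\mathrm{GL}_2(\mathbb{F})$ has $\lambda=1$. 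Let $G_n=\mathrm{SL}_2(\mathbb{F})\wr T_{n-1}$ act imprimitively on $\mathbb{F}^{2\cdot 5^{n-1}}$, and extend to $\mathrm{GL}(m,\mathbb{F})$ by block-diagonal padding. Lemma \ref{trivial}(3) gives $\lambda(G_n)\leq 1+\lambda(T_{n-1})=n$. For the matching lower bound, factor out the center of the base to reach $\overline{G_n}=\mathrm{PSL}_2(\mathbb{F})\wr T_{n-1}$: its base $B=\mathrm{PSL}_2(\mathbb{F})^{5^{n-1}}$ is semisimple, forcing $R_1(\overline{G_n})\cap B=1$, and combined with $R_1(T_{n-1})=1$ this gives $R_1(\overline{G_n})=1$; the transitive permutation of the $5^{n-1}$ simple factors makes $B$ the unique minimal normal subgroup of $\overline{G_n}$, so $S_1(\overline{G_n})=B$ and $\overline{G_n}/S_1(\overline{G_n})\cong T_{n-1}$. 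Hence $\lambda(\overline{G_n})=1+\lambda(T_{n-1})=n$ by induction, whence $\lambda(G_n)\geq n$.

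For the lower bound when $|\mathbb{F}|\leq 3$, the previous construction fails because $\mathrm{PSL}_2(\mathbb{F}_2)$ and $\mathrm{PSL}_2(\mathbb{F}_3)$ are solvable. One replaces $\mathrm{SL}_2(\mathbb{F})$ by $\mathrm{SL}_3(\mathbb{F})=\mathrm{PSL}_3(\mathbb{F})$, which is nonabelian simple for both $\mathbb{F}=\mathbb{F}_2$ and $\mathbb{F}=\mathbb{F}_3$ (its center being trivial since $\gcd(3,|\mathbb{F}|-1)=1$) and sits in $\mathrm{GL}_3(\mathbb{F})$. The same analysis shows $\mathrm{SL}_3(\mathbb{F})\wr T_{n-1}\leq\mathrm{GL}(3\cdot 5^{n-1},\mathbb{F})$ has length $n$, yielding $\lambda_\mathbb{F}(m)\geq 1+\lfloor\log_5(m/3)\rfloor$. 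The main technical point throughout is pinning down the nonsolvable length of the wreath product: the upper bound is immediate from Lemma \ref{trivial}(3), but the lower bound requires locating the $RS$-series precisely, and this rests on the transitivity of the top group on the simple factors of the base, which makes the base the unique minimal normal subgroup of the quotient by the base's center.
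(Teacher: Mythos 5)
Your proposal is correct and takes essentially the same route as the paper: the upper bound is obtained from Lemma \ref{linear degree} (reached via Theorem \ref{main} and Proposition \ref{nearly faithful}), and the lower bounds come from embedding a wreath product of a $2$-dimensional (resp.\ $3$-dimensional) nonsolvable linear group with an iterated wreath power of $\Alt{5}$ acting on $5^{n-1}$ blocks. The only differences are cosmetic: the paper uses $\mathrm{GL}_2(\mathbb{F})$ and $\mathrm{GL}_3(\mathbb{F})$ where you use $\mathrm{SL}_2(\mathbb{F})$ and $\mathrm{SL}_3(\mathbb{F})$, and the paper simply asserts the value of $\lambda$ for these wreath products while you verify it by locating the $RS$-series.
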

\begin{proof} 
If $G\leq \mathrm{GL}(m,\mathbb{F})$ has $\lambda(G)=\lambda_\mathbb{F}(m)$, 
then $m\geq 2\cdot 5^{\lambda_\mathbb{F}(m) -1}$, by Lemma \ref{linear degree}. 
From this inequality we get $\lambda_\mathbb{F}(m)\leq 
1+\lfloor \log_5(m/2)\rfloor$.

Conversely, if $\mathbb{F}$ has order at least $4$ let 
$n=\lfloor \log_5(m/2)\rfloor$ and consider $W$ an $n$-fold wreath 
product of copies of $\Alt{5}$ in natural action, acting on a set $\Omega$ 
of cardinality $5^n$. The group $G=\mathrm{GL}(2,\mathbb{F})\wrr_\Omega{W}$ 
can be embedded into $\mathrm{GL}(m,\mathbb{F})$. 
Since $\mathbb{F}$ has at least four elements, $\lambda(\mathrm{GL}(2,\mathbb{F}))=1$
and therefore $\lambda(G)=n+1$. Hence 
$\lambda_\mathbb{F}(m)\geq 1+\lfloor \log_5(m/2)\rfloor$.\\
If $\left|\mathbb{F}\right|\leq 3$ we define 
$n=\lfloor \log_5(m/3)\rfloor$ and let $G$ be 
$\mathrm{GL}(3,\mathbb{F})\wrr_\Delta{X}$, where $X$ is the $n$-fold 
wreath product of alternating groups of degree $5$, acting naturally 
on the set $\Delta$ or order $5^n$. 
Clearly $\lambda(G)=1+\lfloor \log_5(m/3)\rfloor$. Hence in this case we have 
$$
 1+\lfloor \log_5(m/2)\rfloor\leq \lambda_\mathbb{F}(m)\leq 1+
   \lfloor \log_5(m/2)\rfloor\qquad \textrm{ when }\left|\mathbb{F}\right|\leq 3
$$
which completes the proof of the Theorem.
\end{proof}

We remark that $k=\lfloor \log_5(m/2)\rfloor>\lfloor \log_5(m/3)\rfloor$ 
if and only if  $2\cdot 5^k\leq m<3\cdot 5^k$.

\bigskip

We finish this section establishing a lower bound for the exponent 
of a group in $\Lambda_n$.
\begin{prop}\label{exponent} Let $G$ be a group in $\Lambda_n$. 
Then $G$ contains elements of order $2^n$ and therefore 
 $\exp(G)\geq 2^n$.
\end{prop}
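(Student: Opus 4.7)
The plan is to reduce to $n$-rarefied groups via Theorem \ref{main} and then induct on $n$. By Theorem \ref{main} we may replace $G$ by an $n$-rarefied subgroup. Since $G/\Phi(G)$ is still $n$-rarefied (Proposition \ref{quotient}), and any lift to $G$ of an element of order $2^n$ in $G/\Phi(G)$ has order divisible by $2^n$ in $G$ (so a suitable power of it has order exactly $2^n$), we may further assume $\Phi(G)=1$. Then $S_1(G)=S^*=S_1\times\cdots\times S_r$ with each $S_i\simeq S\in\mathcal{L}$. The base case $n=1$ is immediate: each $S_i$ is a nonabelian finite simple group, hence of even order by Feit--Thompson, so $G$ contains an involution.

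For the inductive step, fix $D\leq S$ as provided by Lemma \ref{simple2}: a $2$-nilpotent, self-normalizing subgroup of $S$ with $O_2(D)=1$ that extends from $S$ to $\Aut{S}$. A direct case-by-case inspection of the subgroups $D$ exhibited in the proof of Lemma \ref{simple2} shows that each of them contains an involution. Lemma \ref{extend} then gives that $D^*$ extends from $S^*$ to $G$, so with $X:=N_G(D^*)$ we have $G=S^*X$ and, using $N_{S^*}(D^*)=D^*$, an isomorphism $X/D^*\simeq G/S^*$, which is $(n-1)$-rarefied. By the inductive hypothesis there is $\bar w\in X/D^*$ of order $2^{n-1}$; pick any lift $w\in X$. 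Replacing $w$ by an odd power we may assume that $w^{2^{n-1}}\in D^*$ is a $2$-element. If $w^{2^{n-1}}\neq 1$ then $w$ has order divisible by $2^n$ and the claim follows by taking a suitable power; otherwise $w$ has order exactly $2^{n-1}$ in $G$.

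It remains to find an involution $t\in D^*$ such that $(wt)^{2^{n-1}}$ is a nontrivial involution, for then $wt$ will have order $2^n$. Using $w^{2^{n-1}}=1$ one computes
\[
(wt)^{2^{n-1}} \;=\; t^{w^{2^{n-1}-1}}\cdot t^{w^{2^{n-1}-2}}\cdots t^{w}\cdot t \;\in\; D^*,
\]
and this choice of $t$ is the main technical obstacle. When $\langle\bar w\rangle$ has an orbit of length $2^{n-1}$ on the set of components $\{S_1,\ldots,S_r\}$, taking $t$ to be an involution of $D_i$ for some $S_i$ lying in such an orbit settles the matter: the conjugates $t^{w^j}$ then lie in pairwise distinct components, hence commute pairwise, and their product is a nontrivial involution of $D^*$. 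The delicate case is when every $\langle\bar w\rangle$-orbit on components has length strictly less than $2^{n-1}$; then some nontrivial power of $w$ induces on each $S_i$ an outer automorphism of nontrivial $2$-power order, and I would conclude by a secondary induction within a single $\langle\bar w\rangle$-orbit, exploiting the abundance of involutions in $D$ together with the explicit structure of $\Out{S}$ for $S\in\mathcal{L}$.
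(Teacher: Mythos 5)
Your overall strategy --- reduce to an $n$-rarefied $G$ with $\Phi(G)=1$, lift an element of order $2^{n-1}$ from a quotient, and multiply it by an involution supported in a single component lying in a $\langle w\rangle$-orbit of length $2^{n-1}$ --- is exactly the paper's, and your computation of $(wt)^{2^{n-1}}$ as a commuting product of conjugate involutions is the same as the paper's computation of $(ag^{-1})^{2^{n-1}}$. The detour through Lemma \ref{simple2} and the subgroup $D^*$ is unnecessary: nothing in the argument requires $t$ to lie in $D^*$, only that $t$ be an involution of some component $S_i$ whose $\langle w\rangle$-orbit is regular, so there is no need to introduce $N_G(D^*)$ at all.

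The genuine gap is your ``delicate case,'' which is not vacuous under your setup and which you do not resolve. You take $\bar w$ of order $2^{n-1}$ in $X/D^*\simeq G/S_1(G)$; the inductive hypothesis gives no control on the image of $w$ modulo the kernel $\bigcap_i N_G(S_i)$ of the action on the components, so all $\langle w\rangle$-orbits on $\{S_1,\dots,S_r\}$ may have length at most $2^{n-2}$ (this happens precisely when $\bar w^{2^{n-2}}$ lands in $\bigcap_i N_G(S_i)/S_1(G)$, which sits inside the solvable radical $R_2(G)/S_1(G)$ of $G/S_1(G)$ and need not be trivial). Your proposed secondary induction via $\Out{S}$ is only a sketch, and it is not even true as stated that $w^{2^{n-2}}$ must then induce an \emph{outer} automorphism of $2$-power order on each $S_i$. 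The paper sidesteps all of this by applying the inductive hypothesis to $G/R_2(G)$ rather than to $G/S_1(G)$: choosing $g$ so that $gR_2(G)$ has order $2^{n-1}$ and $|g|=2^{n-1}$ forces $\langle g\rangle\cap R_2(G)=1$; since $C_G(S_1(G))=1$ the kernel $\bigcap_i N_G(S_i)$ is solvable modulo $S_1(G)$ and hence contained in $R_2(G)$, so $g^{2^{n-2}}$ cannot normalize every component and a regular orbit of length $2^{n-1}$ must exist. Replacing your choice of quotient by $G/R_2(G)$ repairs the proof and makes the delicate case disappear.
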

\begin{proof}
Since the claim clearly holds for $n=0,1$, we argue by induction on $n$, 
assuming the result true for groups in $\Lambda_{n-1}$.
Let $G$ be any group in $\Lambda_n$. In order to prove the claim it is 
enough to find an element of order $2^n$ in a suitable section of $G$. 
Therefore we can assume that $G$ is $n$-rarefied and that $\Phi(G)=1$. 
Let $gR_2(G)$ be an element 
of order $2^{n-1}$ in $G/R_2(G)\in\Lambda_{n-1}$. 
If the $2$-part of $\vert g\vert$ is greater that $2^{n-1}$ 
there is nothing to prove, therefore we may assume without 
loss of generality that $g$ can be choosen of order exactely $2^{n-1}$, 
so that $\langle g\rangle \cap R_2(G)=1$.
Consider the action of $\langle g \rangle$ on the components of 
$S_1(G)=\prod_{i=1}^lS_i$. Note that there is of course at least 
one orbit of length $2^{n-1}$,
otherwise the nontrivial subgroup $\langle g^{2^{n-2}} \rangle$ 
will lie in $\bigcap_{i=1}^l N_G(S_i)\leq R_2(G)$, a contradiction.
Assume that $\Delta=\{S_i^{x}\vert x\in \langle g \rangle\}$ is an
orbit of length $2^{n-1}$. Then if $a$ is any element of $S_1(G)$ whose 
only nontrivial entry is in position $i$, we have
$$
(ag^{-1})^{m}=
aa^g a^{g^2}\cdots a^{g^{m-1}}g^{-m}.
$$
In particular for $m=2^{n-1}$ this is an element of the same order 
of $a$. Choosing $a$ of order $2$ we get the claim.
\end{proof}

\end{document}